\documentclass[11pt]{article}
\usepackage{amssymb,amsmath,amsthm,amsfonts, commath, mathrsfs, mathtools, secdot}
\usepackage[T1]{fontenc}		
\usepackage[a4paper, margin=0.97in]{geometry}

\usepackage{bbm}
\usepackage{extarrows}
\usepackage{enumerate}
\usepackage{multicol}	
\usepackage{fge}
\usepackage{oldgerm}
\usepackage{hyperref} 
\usepackage{paralist} 
\usepackage{accents}
\usepackage{pifont}
\usepackage{authblk}

\makeatletter
\newcommand{\myitem}[1]{%
\item[#1]\protected@edef\@currentlabel{#1}%
}
\makeatother

\usepackage{tikz}
\usetikzlibrary{datavisualization}
\usetikzlibrary{datavisualization.formats.functions}
\usepackage{pgfplots}
\usepackage{tkz-fct}
\pgfplotsset{compat=1.18}
\pgfplotsset{every axis/.append style={
    axis x line=middle,    
    axis y line=middle,    
    axis line style={<->}, 
    xlabel={$x$},          
    ylabel={$y$},          
    },
    cmhplot/.style={color=blue,mark=none,line width=1pt,<->},
    soldot/.style={color=blue,only marks,mark=*},
    holdot/.style={color=blue,fill=white,only marks,mark=*},
}
    
\tikzset{>=stealth}

\usepackage{blindtext}
\usepackage{caption}
\usepackage[natbib=true, backend=biber, style=numeric]{biblatex}
\usepackage{csquotes}
\sectiondot{subsection}
\usepackage{xcolor}

\usepackage{color}

\newtheoremstyle{Assump}%
  {3pt}
  {3pt}
  {\itshape}
  {}
  {\bfseries}
  {.}
  {.5em}
  {\thmname{#1} \thmnumber{#2} \thmnote{\normalfont#3}}

\newtheorem{theorem}{Theorem}[section]
\newtheorem{lem}[theorem]{Lemma}

\newtheorem{cor}[theorem]{Corollary}
\theoremstyle{definition}
\theoremstyle{definition}
\theoremstyle{definition}\newtheorem{remark}[theorem]{Remark}
\theoremstyle{definition}
\theoremstyle{definition}

\numberwithin{equation}{section}

\newcommand{\e}{\operatorname{e}}
\newcommand{\N}{\mathbb{N}}
\newcommand{\R}{\mathbb{R}}

\newcommand{\Q}{\mathbb{Q}}
\newcommand{\D}{{\bf D}}											

\newcommand*\diff{\mathop{}\!\mathrm{d}}								
\newcommand{\E}{\mathbb{E}}											
\newcommand{\Pro}{\mathbb{P}}										
\newcommand{\oland}{\mbox{\scriptsize $\mathcal{O}$}}						
\newcommand{\Oland}{\mathcal{O}}										
\newcommand{\ind}{\operatorname{\mathbf{1}}}							
\renewcommand{\sup}{\operatorname{sup}}					                
\renewcommand{\inf}{\operatorname{inf}}			                        
    
\makeatletter
    \def\@fnsymbol#1{\ensuremath{\ifcase#1\or \ddagger\or  \mathsection \or \dagger \or \mathparagraph\or \|\or **\or \dagger\dagger
        \or \ddagger\ddagger \else\@ctrerr\fi}}
\makeatother

\newcommand*\samethanks[1][\value{footnote}]{\footnotemark[#1]}

\title{An Aldous-type tightness characterisation for the $M_1$ topology: replacing global modulus control by local control at stopping times}
\date{\today}

\author{Grégoire Szymanski\thanks{Department of Mathematics, University of Luxembourg} \, and  Fabrice Wunderlich\samethanks \,
\thanks{Department of Statistics, London School of Economics}}

\begin{document}
\maketitle

\begin{abstract}

Aldous' criterion provides a stopping-time characterisation of tightness in Skorokhod's $J_1$ topology and is one of the standard tools for proving functional limit theorems. For the weaker $M_1$ topology, where limiting jumps can be approximated by clustered jumps or steep continuous motion, no analogous characterisation has been available and tightness classically requires controlling a global oscillation modulus over all nearby triples of times. We prove such an Aldous-type characterisation of $M_1$ tightness, replacing the global modulus condition by local estimates at stopping times ruling out rapid reversals after a large excursion. Together with the standard compact containment and boundary conditions, these estimates are both necessary and sufficient for $M_1$ tightness. As a consequence, we derive tractable and general sufficient conditions for additive functionals of strong Markov processes. They reduce the verification of $M_1$ tightness to short-time estimates for the processes restarted from its state at suitable stopping times.

\medskip
\noindent\textbf{Keywords:} Skorokhod $M_1$ topology; tightness; Aldous' criterion; stopping times; functional limit theorems; additive functionals of Markov processes

\smallskip
\noindent\textbf{MSC2020:} Primary 60F17; Secondary 60B10, 60G40, 60J25
\end{abstract}

\section{Introduction}

On the space of càdlàg paths, Skorokhod's $M_1$ topology, which was introduced in 1956 in \cite{skorokhod}, has become an increasingly important mode of functional convergence for stochastic processes with jumps. In contrast to the stronger $J_1$ topology, $M_1$ allows  limiting jumps to be approximated by several clustered jumps or by an increasingly steep continuous motion. Consequently, $M_1$ arises naturally in heavy-tailed and stable limit theorems \cite{avram, basrak_krizmanic_segers, Melbourne_Varandas, Jung_Zhang}, continuous-time and trapped random walks \cite{arous_btm, croydonM1}, queueing and stochastic-network models \cite{Mikosch, PangWhitt, ramanan_M1}, interacting-particle and mean-field systems \cite{Dembo_LiCheng, mercy, Nadtochiy_Skholni2}, and singular stochastic control \cite{Asaf_cohen_M1, Fu_Horst}. We refer to \cite[Ch.~11.5ff.]{whitt} for a detailed treatment of the $M_1$ topology and its role in weak convergence theory, and to \cite[Ch.~3]{billingsley} for the corresponding theory in the stronger $J_1$ topology. \\

While the weaker nature of the $M_1$ topology allows for a much broader class of approximations, establishing tightness and weak convergence often remains substantially less tractable than for its $J_1$ counterpart. In the $J_1$ setting, tightness can often be verified through local estimates, in particular through stopping-time criteria. Such methods include the conditioned-shift approach of Kurtz \cite{kurtz1975}, Jacod, Mémin and Métivier \cite{JacodMeminMetivier1983}, and most prominently Aldous' stopping-time criterion \cite{aldous} together with the related criteria for martingales and semimartingales developed by Rebolledo \cite{rebolledo}. These results reduce tightness to the control of local increments over short random intervals. No corresponding stopping time characterisation has been available for $M_1$.

While rapid increments are compatible with $M_1$ tightness and may in fact approximate a jump of the limit, the relevant obstruction is instead a rapid reversal following a recent large excursion. Our main result shows that controlling precisely such reversals at stopping times yields a necessary and sufficient characterisation of $M_1$ tightness.

The classical characterisation of $M_1$ tightness is formulated through a global modulus which measures the distance of the path from line segments connecting nearby points. More precisely, at its core, one has to verify that, for every $\varepsilon>0$,
\begin{align}
\lim_{\delta\downarrow0}\, 
\limsup_{n\to\infty} \;
\Pro\bigl(w(X^n,\delta)>\varepsilon\bigr) \, = \, 0, \label{eq:classical_tightness_condition_modulus}
\end{align}
where
$$
w(x,\delta)
:= \, 
\sup_{0\le t\le T}\, 
\sup_{(t-\delta)\vee0\le t_1\le t_2\le t_3\le(t+\delta)\wedge T} \ 
{\rm dist}\bigl(x(t_2),[x(t_1),x(t_3)]\bigr)
$$
and 
$${\rm dist}(a,[b,c]):=\operatorname{min}\{|a-\lambda b-(1-\lambda)c|:0\le \lambda \le 1\}.$$
Together with standard compact containment and boundary conditions, Condition \eqref{eq:classical_tightness_condition_modulus} is in fact necessary and sufficient for $M_1$ tightness (see Theorem~\ref{thm:classical_characterisation_tightness_M1} below). This is a \emph{global} geometric condition: one must control the behaviour of the process uniformly over all nearby triples of times. As a consequence, even when the model admits local information, an additional argument is generally required to translate this local information into the required global modulus estimate. 

For the direct verification of $M_1$ tightness, there are currently only a few general mechanisms for avoiding a direct control of the global modulus. The first is monotonicity, for which the oscillation modulus vanishes entirely and the interior modulus condition is automatic~\cite[Cor.~12.5.1]{whitt}. A second mechanism consists of composition arguments based on continuity results for the composition map in the $M_1$ topology, most notably the theorems given in \cite[Ch.~13.2]{whitt}. These allow one to avoid proving $M_1$ tightness directly, provided that tightness and convergence of the driving process and the clock are already available and the limiting pair satisfies suitable structural assumptions. Outside these two settings, the general localisation principle is a sufficient condition given by Avram and Taqqu \cite[Prop.~3]{avram}, based on a Kolmogorov-type estimate for deterministic three-point oscillations:
\begin{equation} 
    \Pro\big({\rm dist}(X^n_{t_2},[X^n_{t_1},X^n_{t_3}])>\varepsilon\big) \; \le \; L\varepsilon^{-\nu}(t_3-t_1)^{1+\beta}. \label{eq:avram_taqqu_Kolmogorov_control}
\end{equation}
Here $L,\beta,\nu>0$ are fixed and the bound is required to hold for all $t_1\le t_2\le t_3$, $\varepsilon>0$, and $n\ge1$; crucially, it is superlinear in $t_3-t_1$. Their criterion constitutes the key localisation step in the proof of their seminal work on weak $M_1$ convergence for heavy-tailed moving averages. In general the condition is, however, rather restrictive: it requires uniform quantitative control of three-point oscillations over arbitrary deterministic triples of times, with a superlinear dependence on the length of the surrounding time interval. Such an estimate can be extremely difficult to establish and need not hold in models with less explicit structure. Unlike \eqref{eq:classical_tightness_condition_modulus}, the estimate \eqref{eq:avram_taqqu_Kolmogorov_control} is only a sufficient condition: it implies \eqref{eq:classical_tightness_condition_modulus}, but it is not necessary for $M_1$ tightness.\\

\noindent \textbf{Contribution of this paper.}
The purpose of the present paper is to provide a stopping-time localisation principle for $M_1$ tightness. We show that the classical modulus condition \eqref{eq:classical_tightness_condition_modulus} admits an equivalent formulation entirely in terms of local estimates at stopping times, in the spirit of Aldous' criterion for $J_1$. Thus, the global modulus condition underlying $M_1$ tightness can be localised without loss of information.

The resulting characterisation identifies rapid reversals following recent excursions as the relevant obstruction to $M_1$ tightness. As a consequence, for strong Markov processes, the required stopping-time estimates reduce by the strong Markov property to short-time estimates after restarting the processes at selected stopping times. We formulate this as a general sufficient criterion for Markov-driven additive functionals and illustrate its use on a chemical reaction network with a discontinuous scaling limit.

\section{Main result} \label{sec:main_results}

Fix $T>0$ and denote the space of all real-valued càdlàg paths $x:[0,T] \to \R$ which are continuous at time $T$ by $\D_{\R}[0,T]$. Let $(X^n)_{n\ge1}$ be a sequence of real-valued adapted càdlàg processes taking values in $\D_{\R}[0,T]$, defined on filtered probability spaces $(\Omega^n,\mathcal F^n,\mathbb F^n,\Pro^n)$ which satisfy the usual conditions. Denote by $\mathcal T^n$ the collection of $\mathbb F^n$-stopping times taking values in $[0,T]$. Throughout, for a càdlàg process $X$ we use the notation $\|X\|_\infty:=\sup_{0\le s\le T}|X_s|$ for the supremum of the absolute value of $X$ on $[0,T]$. Moreover, we will write $a\wedge b:= \operatorname{min}\{a,b\}$ and $a\vee b:= \operatorname{max}\{a,b\}$. The Borel $\sigma$-algebra of $(\D_{\R}[0,T],M_1)$ coincides with the $\sigma$-algebra generated by the coordinate maps $x\mapsto x(t)$, $0\le t\le T$ (see, e.g., \cite[Thm.~11.5.2]{whitt}). Hence each $X^n$ is a random element of $(\D_{\R}[0,T],M_1)$.

We first recall the standard characterisation of $M_1$ tightness, which can be found, for instance, in \cite[Thm.~12.12.3]{whitt}.

\begin{theorem}[Classical characterisation of $M_1$ tightness] \label{thm:classical_characterisation_tightness_M1}
The sequence $(X^n)_{n\ge 1}$ is tight on the space $(\D_{\R}[0,T], M_1)$ if and only if 
\begin{enumerate}[(i)]
\item  \label{it1:classical_charact_tightness_M1} for every $\eta>0$ there exists $R>0$ such that
\begin{equation*}
\limsup_{n\to \infty} \; \mathbb{P}^n\big (\|X^n\|_\infty\geq R\big) \; \le \;  \eta;
\end{equation*}
\item  \label{it2:classical_charact_tightness_M1} for every $\varepsilon>0$,
\begin{equation*}
\lim_{\delta\downarrow0}\ \limsup_{n\to\infty}
\ \mathbb P^n \Big(
\sup_{0\le t\le\delta}
\big(|X_t^n-X_0^n|\vee|X_{T}^n-X_{T-t}^n|\big)>\varepsilon
\Big)\, = \, 0.
\end{equation*}
\item \label{it3:classical_charact_tightness_M1} Condition \eqref{eq:classical_tightness_condition_modulus} is satisfied.
\end{enumerate}
\end{theorem}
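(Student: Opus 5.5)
This is the classical characterisation from Whitt's book, and the plan is to deduce it from the characterisation of relative compactness in $(\D_{\R}[0,T],M_1)$ together with Prohorov's theorem. The space $(\D_{\R}[0,T],M_1)$ is Polish: it is separable, and it is topologically complete under an equivalent metric (Whitt, Sect.~12.8). Hence tightness is equivalent to relative compactness of the laws, and it suffices to work with compact sets in $\D$. I will use the Arzelà--Ascoli type result (Whitt, Thm.~12.12.2). It says that a set $A\subset\D$ has compact closure in $M_1$ if and only if three things hold: $\sup_{x\in A}\|x\|_\infty<\infty$; $\lim_{\delta\downarrow0}\sup_{x\in A} w(x,\delta)=0$; and $\lim_{\delta\downarrow 0}\sup_{x\in A}\sup_{0\le t\le\delta}(|x(t)-x(0)|\vee|x(T)-x(T-t)|)=0$. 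Since each $X^n$ is a single random element of a Polish space, each individual law is tight. Therefore $\limsup_n$ and $\sup_n$ versions of the conditions are interchangeable, by enlarging $R$ or shrinking $\delta$ for finitely many $n$.

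\emph{Necessity.} Assume the sequence is tight. Given $\eta$, pick a compact $K$ with $\Pro^n(X^n\notin K)\le\eta$ for all $n$. Applied to $K$, the compactness criterion gives a bound $\|x\|_\infty\le R$ on $K$, which yields (i). For (ii) and (iii), fix $\varepsilon$ and choose $\delta_0$ so that on $K$ the boundary oscillations and $w(x,\delta)$ are at most $\varepsilon$ for $\delta\le\delta_0$. Then the probabilities in (ii) and \eqref{eq:classical_tightness_condition_modulus} are at most $\eta$ for all $n$ and all $\delta\le\delta_0$. Since $\eta$ is arbitrary, the double limits vanish. One point needs checking: the events involved must be measurable. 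I will handle this by restricting the suprema to rational times plus $T$, which is justified by right-continuity. For $w$, note that the supremum over triples can be approximated by right limits.

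\emph{Sufficiency.} Fix $\eta>0$. Using (i) and the single-element tightness remark, pick $R$ with $\sup_n\Pro^n(\|X^n\|_\infty>R)\le\eta/2$. For each $k$, combine (ii) and (iii) with $\varepsilon=1/k$. This gives $\delta_k$ such that, for all $n$, the probability that either $w(X^n,\delta_k)>1/k$ or the boundary oscillation on $[0,\delta_k]\cup[T-\delta_k,T]$ exceeds $1/k$ is at most $\eta 2^{-k-1}$. Handling the finitely many small $n$ again uses individual tightness together with $w(x,\delta)\to0$ and the boundary oscillation tending to $0$ for each fixed $x\in\D$. That fact holds because $x$ is càdlàg and continuous at $T$.

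Let $A$ be the set of paths satisfying all these constraints simultaneously. By monotonicity of $w$ and of the boundary oscillation in $\delta$, $A$ satisfies the compactness criterion, so $\bar A$ is compact. Moreover $\Pro^n(X^n\notin\bar A)\le\eta$ for every $n$, which proves tightness.

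The main obstacle is that everything rests on the compactness characterisation. If that result is not to be taken from Whitt, one must prove directly that bounded sets with uniformly small $w$ and small boundary oscillation are $M_1$-relatively compact. The route I would try is to approximate each path by finitely many parametric representations on a grid, using the fact that small $w$ forces near-monotone behaviour on each $\delta$-window, and then apply a diagonal argument on the graphs. Within the plan above, this is the step I would cite rather than reprove.
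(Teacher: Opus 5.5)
Your proposal is correct and follows essentially the same route as the paper. The paper gives no proof of its own and simply cites Whitt, Thm.~12.12.3; your reduction to the compactness characterisation (Whitt, Thm.~12.12.2, whose boundary oscillations agree with the quantity in (ii) up to a factor $2$) together with Prohorov/Ulam to pass between $\limsup_n$ and $\sup_n$ is precisely the standard argument behind that citation, and the measurability of $w(X^n,\delta)$ you flag is exactly what the paper verifies in Lemma~\ref{lem:stopped_paths_random_windows}(c).
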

For a recent geometric characterisation of $M_1$ tightness for random compact sets of càdlàg real-valued paths, see Freeman and Swart \cite{FreemanSwart2025}.

The philosophy behind our main result, Theorem \ref{thm:tightness_criterion_M1} below, is simple. The oscillation modulus $w(x,\delta)$ can become large only if the path first undergoes a substantial excursion in one direction and then rapidly reverses direction. Consequently, rather than controlling all nearby triples of times simultaneously, it should be sufficient to control the probability of such local reversals.

Theorem \ref{thm:tightness_criterion_M1} below replaces the global requirement in \eqref{eq:classical_tightness_condition_modulus} by local stopping-time estimates. For $\delta>0$ and stopping times $\tau,\sigma\in\mathcal T^n$, define the local reversal functional
\[
\mathcal R_\delta^n(\tau,\sigma)
:= \; 
\sup_{(\tau-\delta)\vee0\le s\le\tau} \ 
\operatorname{dist}\bigl(
X_\tau^n,[X_s^n,X_\sigma^n]
\bigr).
\]
The following is the $M_1$ counterpart of Aldous' stopping-time characterisation of $J_1$ tightness.
\begin{theorem}[Aldous-type characterisation of $M_1$ tightness]\label{thm:tightness_criterion_M1}
The sequence $(X^n)_{n\ge 1}$ is tight on the space $(\D_{\R}[0,T], M_1)$ if and only if Conditions \eqref{it1:classical_charact_tightness_M1} and \eqref{it2:classical_charact_tightness_M1} of Theorem \ref{thm:classical_characterisation_tightness_M1} are satisfied,
and, for every $\varepsilon>0$,
\begin{equation}
\lim_{\delta\downarrow0}\ \limsup_{n\to\infty}
\ \sup_{\substack{\tau, \sigma\in\mathcal T^n\\
\tau\le\sigma\le(\tau+\delta)\wedge T}}
\mathbb P^n\!\left(
\mathcal R_\delta^n(\tau,\sigma)>\varepsilon
\right)
\, = \, 0. \label{it3:main_thm}
\end{equation}
\end{theorem}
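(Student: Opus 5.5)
Given Theorem~\ref{thm:classical_characterisation_tightness_M1}, it suffices to show that, under conditions \eqref{it1:classical_charact_tightness_M1} and \eqref{it2:classical_charact_tightness_M1}, the modulus condition \eqref{eq:classical_tightness_condition_modulus} is equivalent to \eqref{it3:main_thm}.

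\emph{Necessity.} Since $(\tau-\delta)\vee 0\le s\le\tau\le\sigma\le\tau+\delta$, the triple $(s,\tau,\sigma)$ lies in a window of radius $\delta$ around $t=\tau$. Hence pathwise
\[
\mathcal R^n_\delta(\tau,\sigma)\le w(X^n,\delta),
\]
and \eqref{it3:main_thm} follows from \eqref{eq:classical_tightness_condition_modulus} uniformly over the pairs $(\tau,\sigma)$.

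\emph{Sufficiency.} Fix $\varepsilon>0$. I would show that the event $\{w(X^n,\delta)>\varepsilon\}$ forces a reversal detected at suitably chosen stopping times, and that the number of stopping times needed is bounded in probability. The construction is iterative.
\begin{itemize}
\item Set $\rho_0=0$.
\item Let $\rho_{k+1}$ be the first time after $\rho_k$ at which the path has moved by $\varepsilon/4$, that is, $\rho_{k+1}=\inf\{t>\rho_k:|X^n_t-X^n_{\rho_k}|>\varepsilon/4\}\wedge T$.
\end{itemize}
On $\{w(X^n,\delta)>\varepsilon\}$ there are $t_1\le t_2\le t_3$ within $2\delta$ of each other with $X^n_{t_2}$ at distance more than $\varepsilon$ from $[X^n_{t_1},X^n_{t_3}]$. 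Say $X^n_{t_2}$ exceeds both endpoints by $\varepsilon$; the other case is symmetric. Then some stopping time $\tau$ in the sequence is crossed during the rise, with $X^n_\tau$ more than about $\varepsilon/2$ above $X^n_{t_1}$, where $t_1\in[\tau-2\delta,\tau]$. We then need a second stopping time $\sigma\le\tau+2\delta$ capturing the descent: the first time after $\tau$ at which $X^n$ drops below $X^n_\tau-\varepsilon/4$, truncated at $(\tau+2\delta)\wedge T$. On the bad event, $\sigma$ hits the drop, and then $\operatorname{dist}(X^n_\tau,[X^n_{t_1},X^n_\sigma])\ge\varepsilon/4$, so $\mathcal R^n_{2\delta}(\tau,\sigma)>\varepsilon/8$. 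Since both directions must be handled, I would run two families: for each $\rho_k$, one candidate $\sigma_k^+$ for downward reversal and one $\sigma_k^-$ for upward reversal.

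\emph{Main obstacle: controlling the number of indices.} A union bound over $k$ requires the number $N$ of relevant $\rho_k$ before $T$ to be tight. This is not automatic: without reversal control, oscillations of size $\varepsilon/4$ could accumulate.

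My plan is to use \eqref{it3:main_thm} itself together with \eqref{it1:classical_charact_tightness_M1}. On $\{\|X^n\|_\infty<R\}$, consecutive $\varepsilon/4$-moves in the same direction can occur at most about $8R/\varepsilon$ times in a row. Each change of direction between $\rho_{k}$ and $\rho_{k+2}$ within time $\delta$ is itself a reversal with probability small by \eqref{it3:main_thm}. Direction changes separated by more than $\delta$ number at most $T/\delta$. This seems circular, since the bound on the number of changes depends on $\delta$. To break it, I would use the standard Aldous trick: first fix a coarse $\delta_0$ giving a high-probability bound $N\le K(\delta_0,R)$ on the relevant count of reversal-type indices, then choose $\delta\le\delta_0$ so small that $K\cdot\sup_{\tau,\sigma}\mathbb P^n(\mathcal R^n_{2\delta}>\varepsilon/8)$ is small.

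Alternatively, if this fails, I would bound the probability that some reversal occurs at index $k$ by summing over $k\le K$, using $K$ independent of $\delta$. The count of $\varepsilon/4$-crossings in which a reversal matters is essentially the number of large $M_1$ ``jumps'', which is tight by \eqref{it1:classical_charact_tightness_M1}. Finally, the boundary windows near $0$ and $T$, where $(\tau-\delta)\vee0$ truncates, are handled by \eqref{it2:classical_charact_tightness_M1}.
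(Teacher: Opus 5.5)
\textbf{Verdict: there is a genuine gap.} Your necessity argument is fine, and so are the $\varepsilon/4$-grid, the detection of a reversal at the last grid time before the peak, and the two-scale choice of $\delta$ after $\delta_0$. Your level-hitting $\sigma_k^\pm$ works as well as the paper's $\tau_{k+1}^n\wedge(\tau_k^n+\theta)$. The gap is the step you flag yourself and then only assert: that a coarse $\delta_0$ gives a high-probability bound $N\le K(\delta_0,R)$ on the number of grid times.

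The mechanism you sketch goes as follows. On $\{\|X^n\|_\infty\le R\}$, runs of same-sign moves have length at most $M\approx 8R/\varepsilon$, and slow direction changes number at most about $T/\delta_0$. So if $N>K\approx M(T/\delta_0+1)$, some fast direction change occurs among the first $K$ indices. A union bound then only yields $\mathbb P^n(N>K,\|X^n\|_\infty\le R)\lesssim K\,\alpha_n(\delta_0;b)\approx \delta_0^{-1}\alpha_n(\delta_0;b)$. Since \eqref{it3:main_thm} gives no rate, this need not be small for any $\delta_0$: the circularity has only moved to the coarse scale. Your fallback does not work either. Condition~\eqref{it1:classical_charact_tightness_M1} does not make the number of $\varepsilon/4$-moves tight; a uniformly bounded path such as $\tfrac{\varepsilon}{2}\sin(nt)$ has of order $n$ of them. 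That count can only be controlled through the reversal condition itself.

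The missing idea is the one in Lemma~\ref{lem:small:proba}. On the bad event there is not just one fast reversal but a number of them proportional to $B$, so you should apply Markov's inequality to their count rather than a union bound.
\begin{itemize}
\item Take $Ma>2R$ and $B=\lfloor 2T/\theta\rfloor+1$, and group the first $BM$ grid increments into $B$ blocks of $M$ consecutive increments.
\item On $E_n=\{\tau^n_{BM+1}<T,\ \|X^n\|_\infty\le R\}$, every block contains a sign change.
\item A block all of whose sign changes are slow, i.e.\ $\tau_{k+1}^n-\tau_{k-1}^n>\theta$, has duration $>\theta$. The block durations sum to $\tau^n_{BM}<T<B\theta/2$, so more than $B/2$ blocks contain a fast reversal.
\item A fast reversal in block $j$ is an event $F_j$ with $\mathbb P^n(F_j)\le M\alpha_n(\theta;b)$.
\item Hence $\tfrac B2\mathbf 1_{E_n}\le\sum_{j}\mathbf 1_{F_j}$ pointwise. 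Taking expectations gives $\mathbb P^n(E_n)\le 2M\alpha_n(\theta;b)$, and the factor $B\propto 1/\theta$ cancels.
\end{itemize}
Only this $\theta$-free bound lets your two-scale scheme close. First choose $\theta_0$ with $2M\limsup_n\alpha_n(\theta_0;b)$ small, which fixes $q=BM+1$. Then choose $\theta<\theta_0$ with $q\limsup_n\alpha_n(\theta;b)$ small.
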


By Theorems~\ref{thm:classical_characterisation_tightness_M1} and~\ref{thm:tightness_criterion_M1}, for sequences satisfying Conditions~\eqref{it1:classical_charact_tightness_M1} and~\eqref{it2:classical_charact_tightness_M1}, the local reversal condition~\eqref{it3:main_thm} and the modulus condition~\eqref{eq:classical_tightness_condition_modulus} are equivalent, and each of them is necessary and sufficient for $M_1$ tightness. The Avram--Taqqu estimate~\eqref{eq:avram_taqqu_Kolmogorov_control}, by comparison, implies~\eqref{eq:classical_tightness_condition_modulus} but is not necessary for tightness, as observed in the introduction.

As a local stopping time condition, \eqref{it3:main_thm} of Theorem \ref{thm:tightness_criterion_M1} plays a role analogous to Aldous' criterion for the $J_1$ topology. The crucial distinction is that the relevant object is no longer a future increment but rather a future reversal following a recent excursion. For fixed $n\ge1$, $\delta,\varepsilon>0$, and stopping times
$\tau,\sigma\in\mathcal T^n$ with
$\tau\le\sigma\le(\tau+\delta)\wedge T$, the one-dimensional interval geometry gives
\[
\begin{aligned}
\big\{\mathcal R_\delta^n(\tau,\sigma)>\varepsilon\big\}
\, = \; &\big\{
X_\tau^n-X_\sigma^n>\varepsilon,\;\;
\sup_{(\tau-\delta)\vee0\le s\le\tau}
(X_\tau^n-X_s^n)>\varepsilon
\big\}
\\
&\qquad\cup
\big\{
X_\sigma^n-X_\tau^n>\varepsilon,\;\;
\sup_{(\tau-\delta)\vee0\le s\le\tau}
(X_s^n-X_\tau^n)>\varepsilon
\big\}.
\end{aligned}
\]
Indeed, for each $s$ the distance of $X_\tau^n$ from the interval with endpoints
$X_s^n$ and $X_\sigma^n$ exceeds $\varepsilon$ if and only if
$X_\tau^n$ lies more than $\varepsilon$ above both endpoints or more than
$\varepsilon$ below both endpoints. Consequently, \eqref{it3:main_thm} is equivalent to requiring, for every $\varepsilon>0$ and $\xi\in\{+1,-1\}$,
\begin{equation*}
\begin{aligned}
&\lim_{\delta\downarrow0}\ \limsup_{n\to\infty} \, 
\sup_{\substack{\tau, \sigma\in\mathcal T^n\\
\tau\le\sigma\le(\tau+\delta)\wedge T}}
\mathbb P^n\!\left(
\xi\bigl(X_\tau^n-X_\sigma^n\bigr)>\varepsilon,\;\;
\sup_{(\tau-\delta)\vee0\le s\le\tau}
\xi\bigl(X_\tau^n-X_s^n\bigr)>\varepsilon
\right)
\, = \, 0.
\end{aligned}
\end{equation*}
With a view toward practical application, we emphasise a further equivalent formulation of Theorem \ref{thm:tightness_criterion_M1} in Corollary \ref{cor:main} below which, by restricting the class of stopping times $\tau$, only requires to evaluate the probability of the simple difference between $X_\tau$ and $X_\sigma$. For $\delta,\varepsilon>0$, and $\xi \in \{+1,-1\}$ set
\begin{align}
\mathcal{T}^{n,\xi}_{\delta,\varepsilon}
&:=\Big\{\tau\in\mathcal T^n:
\{\tau<T\}\subseteq
\big\{
\sup_{(\tau-\delta)\vee0\le s\le\tau}
\xi (X_\tau^n-X_s^n)>\varepsilon
\big\}{ \ \ \Pro^n\text{-a.s.}}\Big\}.\label{defi:restricted_stopping_time_classes}
\end{align}
Then we have the following restricted-class reformulation of Theorem~\ref{thm:tightness_criterion_M1}.

\begin{cor}
    \label{cor:main}
The sequence $(X^n)_{n\ge1}$ is tight on $(\D_{\R}[0,T],M_1)$ if and only if
Conditions \eqref{it1:classical_charact_tightness_M1} and
\eqref{it2:classical_charact_tightness_M1} of Theorem~\ref{thm:classical_characterisation_tightness_M1} are satisfied and, for every $\varepsilon>0$ and $\xi \in \{+1,-1\}$,
    \begin{equation*}
        \lim_{\delta\downarrow0}\ \limsup_{n\to\infty}
\ \sup_{\substack{\tau\in\mathcal T^{n,\xi}_{\delta,\varepsilon}, \; \sigma\in\mathcal T^n\\
\tau\le\sigma\le(\tau+\delta)\wedge T}}
\ \mathbb P^n\!\left(
\xi(X_\tau^n-X_\sigma^n)>\varepsilon
\right)
\, = \, 0.
    \end{equation*}
\end{cor}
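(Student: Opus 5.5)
We will derive Corollary~\ref{cor:main} directly from Theorem~\ref{thm:tightness_criterion_M1}. Conditions \eqref{it1:classical_charact_tightness_M1} and \eqref{it2:classical_charact_tightness_M1} are common to both statements. It therefore suffices to show that, for each fixed $\varepsilon>0$, condition \eqref{it3:main_thm} is equivalent to the restricted-class condition. By the decomposition of $\{\mathcal R^n_\delta(\tau,\sigma)>\varepsilon\}$ into the two sign events, we may fix $\xi\in\{+1,-1\}$. We then compare the $\xi$-component of \eqref{it3:main_thm},
\[
p_\xi(n,\delta):=\sup_{\tau\le\sigma\le(\tau+\delta)\wedge T}\mathbb P^n\Big(\xi(X^n_\tau-X^n_\sigma)>\varepsilon,\ \sup_{(\tau-\delta)\vee0\le s\le\tau}\xi(X^n_\tau-X^n_s)>\varepsilon\Big),
\]
with the restricted quantity $q_\xi(n,\delta)$ appearing in the corollary.

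\emph{Necessity} (restricted $\Leftarrow$ \eqref{it3:main_thm}). Take $\tau\in\mathcal T^{n,\xi}_{\delta,\varepsilon}$ and $\sigma$ with $\tau\le\sigma\le(\tau+\delta)\wedge T$. On $\{\tau=T\}$ we have $\sigma=T=\tau$, so $\xi(X_\tau-X_\sigma)=0$ and the event is empty. On $\{\tau<T\}$, membership in $\mathcal T^{n,\xi}_{\delta,\varepsilon}$ gives a.s.\ the excursion event. Hence
\[
\mathbb P^n\big(\xi(X_\tau-X_\sigma)>\varepsilon\big)=\mathbb P^n\big(\xi(X_\tau-X_\sigma)>\varepsilon,\ \text{excursion}\big)\le p_\xi(n,\delta),
\]
so $q_\xi\le p_\xi$.

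\emph{Sufficiency}. Given arbitrary $\tau\le\sigma\le(\tau+\delta)\wedge T$, let
\[
A:=\Big\{\sup_{(\tau-\delta)\vee0\le s\le\tau}\xi(X_\tau-X_s)>\varepsilon\Big\}
\]
and set $\tau':=\tau\ind_A+T\ind_{A^c}$ and $\sigma':=\sigma\ind_A+T\ind_{A^c}$. The key steps are the following.
\begin{enumerate}[(a)]
\item Show $A\in\mathcal F^n_\tau$. The supremum is over a window determined by $\tau$ and involves only the path up to $\tau$. Using right-continuity, it can be written as a supremum over rational $s$ together with $s=\tau$, hence the process $t\mapsto\sup_{(t-\delta)\vee0\le s\le t}\xi(X_t-X_s)$ is adapted and progressively measurable, and its value at $\tau$ is $\mathcal F_\tau$-measurable. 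Consequently $\tau'$ and $\sigma'$ are stopping times, since $\{\tau'\le t\}=(A\cap\{\tau\le t\})\cup\{t\ge T\}$, and similarly for $\sigma'$ because $A\in\mathcal F_\tau\subseteq\mathcal F_\sigma$.
\item Check $\tau'\in\mathcal T^{n,\xi}_{\delta,\varepsilon}$. On $\{\tau'<T\}\subseteq A$ we have $\tau'=\tau$, so the excursion event holds there.
\item Check $\tau'\le\sigma'\le(\tau'+\delta)\wedge T$. This holds pointwise on both $A$ and $A^c$.
\end{enumerate}
Then the event $\{\xi(X_\tau-X_\sigma)>\varepsilon\}\cap A$ is contained in $\{\xi(X_{\tau'}-X_{\sigma'})>\varepsilon\}$, giving $p_\xi\le q_\xi$. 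Taking suprema, $\limsup_n$ and $\delta\downarrow0$ yields the equivalence, and Theorem~\ref{thm:tightness_criterion_M1} concludes the proof.

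The main obstacle is the measurability in step (a), namely that the backward-window excursion supremum evaluated at a stopping time is $\mathcal F_\tau$-measurable. I would handle this by writing the supremum as a limit over rational $s$ in the window, together with the value at $s=\tau$. This uses the càdlàg property: left limits are approached by rationals, and the right endpoint $s=\tau$ contributes $0$, while the left endpoint is handled by right-continuity. Adaptedness then follows because the window lies inside $[0,t]$. Progressive measurability of adapted càdlàg-type functionals under the usual conditions finishes the argument.
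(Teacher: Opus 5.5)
Your proposal is correct and matches the paper's proof: you reduce via Theorem~\ref{thm:tightness_criterion_M1} and the two-sign decomposition, and you get $q_\xi\le p_\xi$ by noting that $\{\tau=T\}$ forces $\sigma=\tau$. You get the reverse inequality through the same modification $\tau\mathbf 1_A+T\mathbf 1_{A^\complement}$, $\sigma\mathbf 1_A+T\mathbf 1_{A^\complement}$. The only difference is that you justify $A\in\mathcal F^n_\tau$ by progressive measurability of the adapted windowed excursion process $t\mapsto\sup_{(t-\delta)\vee0\le s\le t}\xi(X_t-X_s)$, which is indeed right-continuous (you assert this rather than check it), whereas the paper obtains it from Lemma~\ref{lem:stopped_paths_random_windows}(b) via the $\mathcal F_\tau$-measurability of the stopped path; both routes work.
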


Consider now $d\ge2$. For processes with values in $\D_{\R^d}[0,T]$, two $M_1$ topologies are commonly distinguished \cite[Sect.~12.3]{whitt}: the weak topology $WM_1$ and the strong topology $SM_1$. Tightness in $WM_1$, where the topology corresponds to the $d$-fold product topology $(\D_{\R}[0,T],M_1)^d$, is equivalent to tightness of the individual coordinate processes, and we therefore immediately obtain the following characterisation.

\begin{cor}
    Let $(X^n)_{n\ge1}$ be $\R^d$-valued adapted càdlàg processes. Then $(X^n)_{n\ge1}$ is tight on $(\D_{\R^d}[0,T],WM_1)$ if and only if each of its components satisfies the conditions of Theorem \ref{thm:tightness_criterion_M1} or Corollary \ref{cor:main}.
\end{cor}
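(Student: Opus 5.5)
The statement immediately above is the $WM_1$ corollary, so the plan is to reduce it to the one-dimensional results already stated. I first recall that $(\D_{\R^d}[0,T],WM_1)$ is, by definition in \cite[Sect.~12.3]{whitt}, the product space $(\D_{\R}[0,T],M_1)^d$ as topological spaces. In particular, the coordinate projections $\pi_i:x\mapsto x^{(i)}$ are continuous. Measurability of $X^n$ as a random element is handled as in the one-dimensional case: the Borel $\sigma$-algebra of each factor is generated by the coordinate maps, and the factor is separable, so the product Borel $\sigma$-algebra coincides with the Borel $\sigma$-algebra of the product.

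The proof then has two directions. For necessity, suppose $(X^n)$ is tight in $WM_1$, and let $K$ be compact with $\Pro^n(X^n\notin K)\le\eta$. Then $\pi_i(K)$ is compact in $(\D_{\R}[0,T],M_1)$, and $\Pro^n(X^{n,(i)}\notin\pi_i(K))\le\eta$. Hence each coordinate sequence is tight. For sufficiency, suppose each coordinate sequence is tight. For $\eta>0$ choose compacts $K_i$ with $\sup_n\Pro^n(X^{n,(i)}\notin K_i)\le\eta/d$. By Tychonoff, $K:=K_1\times\cdots\times K_d$ is compact in the product topology. A union bound gives $\Pro^n(X^n\notin K)\le\eta$.

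One technicality arises here. Theorem~\ref{thm:classical_characterisation_tightness_M1} is phrased with $\limsup_n$ rather than $\sup_n$. In the Polish space $(\D_{\R}[0,T],M_1)$, each single law is tight, so a finite number of initial indices can be absorbed by enlarging the compact. Thus ``tight'' in either sense gives the uniform compacts needed above.

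Finally, I apply Theorem~\ref{thm:tightness_criterion_M1} to each coordinate process $X^{n,(i)}$. Each coordinate is a real-valued adapted càdlàg process on the same filtered space, continuous at $T$, so it fits the setting of that theorem. Its conditions are therefore equivalent to $M_1$ tightness of the coordinate. Corollary~\ref{cor:main} gives the same equivalence with its restricted-class condition.

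The only point requiring care, and the one I would expect to be the main obstacle, is the identification of $WM_1$ with the product topology and of the corresponding Borel structures. Everything else is the standard fact that a family of random elements in a finite product is tight if and only if each marginal family is tight.
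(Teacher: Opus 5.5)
Your proof is correct and follows the same route as the paper. The paper identifies $WM_1$ with the product topology $(\D_{\R}[0,T],M_1)^d$, uses that tightness in a finite product is equivalent to tightness of each coordinate, and then applies Theorem~\ref{thm:tightness_criterion_M1} and Corollary~\ref{cor:main} to the coordinates; you simply write out the projection and Tychonoff argument (one small imprecision: in Whitt, $WM_1$ is defined via weak parametric representations, and its agreement with the product topology is a theorem rather than the definition, which does not affect your argument).
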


For the strong topology $SM_1$, coordinatewise tightness is not sufficient. Instead, tightness is characterised through the tightness of all one-dimensional projections. Writing $\mathbb S^{d-1}:=\{\xi\in\R^d:|\xi|=1\}$ 
and applying Theorem \ref{thm:tightness_criterion_M1} and Corollary \ref{cor:main} to the projected sequence of processes $(\xi^\top X^n)_{n\ge 1}$, $\xi\in\mathbb S^{d-1}$, yields the following characterisation \cite[Thm.~12.7.2]{whitt}.  

\begin{cor}\label{prop:multidim}
Let $(X^n)_{n\ge1}$ be $\R^d$-valued adapted càdlàg processes. Then, the following are equivalent:
\begin{enumerate}[(a)]
\item \label{it:multidim_a} $(X^n)_{n\ge1}$ is tight on $(\D_{\R^d}[0,T],SM_1)$;
\item \label{it:multidim_c} Conditions \eqref{it1:classical_charact_tightness_M1} and \eqref{it2:classical_charact_tightness_M1} of Theorem~\ref{thm:classical_characterisation_tightness_M1} are satisfied, and \eqref{it3:main_thm} holds for every $\varepsilon>0$, $\xi \in \mathbb S^{d-1}$ with $X^n$ replaced by $\xi^\top X^n$;
\item \label{it:multidim_d} Conditions \eqref{it1:classical_charact_tightness_M1} and \eqref{it2:classical_charact_tightness_M1} of Theorem~\ref{thm:classical_characterisation_tightness_M1} are satisfied and, for every $\varepsilon>0$ and $\xi\in\mathbb S^{d-1}$,
\begin{equation*}
\lim_{\delta\downarrow0}\ \limsup_{n\to\infty}
\ \sup_{\substack{\tau\in\mathcal T^{n,\xi}_{\delta,\varepsilon}, \; \sigma\in\mathcal T^n\\
\tau\le\sigma\le(\tau+\delta)\wedge T}}
\ \mathbb P^n\!\big(
\xi^\top(X_\tau^n-X_\sigma^n)>\varepsilon
\big)
\, = \, 0,
\end{equation*}
where $\mathcal T^{n,\xi}_{\delta,\varepsilon}$ is defined by \eqref{defi:restricted_stopping_time_classes} with $\xi(X^n_\tau-X^n_s)$ replaced by $\xi^\top(X^n_\tau-X^n_s)$.
\end{enumerate}
\end{cor}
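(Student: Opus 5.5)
The plan is to reduce everything to the one-dimensional results, Theorem~\ref{thm:tightness_criterion_M1} and Corollary~\ref{cor:main}, via the projection characterisation of $SM_1$ tightness in \cite[Thm.~12.7.2]{whitt}. That result states that $(X^n)$ is tight on $(\D_{\R^d}[0,T],SM_1)$ if and only if the $d$-dimensional compact containment and boundary conditions hold and the modulus condition \eqref{eq:classical_tightness_condition_modulus} holds for each projection $\xi^\top X^n$. The same modulus condition, taken over all $\xi\in\mathbb S^{d-1}$, is equivalent to the $SM_1$ modulus condition, since $w_s(x,\delta)$ is controlled by $\sup_\xi w(\xi^\top x,\delta)$ up to dimensional constants.

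Concretely, for (a)$\Leftrightarrow$(b):
\begin{itemize}
\item Conditions (i) and (ii) for $X^n$ in norm are equivalent to the same conditions for all projections $\xi^\top X^n$. One direction is immediate because $|\xi^\top x|\le|x|$. The other follows from the coordinate projections $\xi=e_i$, together with $|x|\le\sqrt d\max_i|e_i^\top x|$.
\item Fix $\xi$. For the process $\xi^\top X^n$, which satisfies (i) and (ii), Theorem~\ref{thm:tightness_criterion_M1} combined with Theorem~\ref{thm:classical_characterisation_tightness_M1} shows that \eqref{it3:main_thm} is equivalent to the modulus condition \eqref{eq:classical_tightness_condition_modulus}. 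Here it matters that $\xi^\top X^n$ is adapted and càdlàg with respect to the same filtration, so that $\mathcal T^n$ is unchanged.
\item Plugging this equivalence into \cite[Thm.~12.7.2]{whitt} gives (a)$\Leftrightarrow$(b).
\end{itemize}

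For (b)$\Leftrightarrow$(c), apply Corollary~\ref{cor:main} to $\xi^\top X^n$ for each fixed $\xi$. That corollary involves both signs $\pm1$, and these are covered here because $-\xi\in\mathbb S^{d-1}$ whenever $\xi\in\mathbb S^{d-1}$. The restricted class $\mathcal T^{n,-1}_{\delta,\varepsilon}$ for $\xi^\top X^n$ is exactly the class in (c) associated with $-\xi$. Thus (c) for all $\xi\in\mathbb S^{d-1}$ is the same as the condition in Corollary~\ref{cor:main} for every projection. Since (i) and (ii) are assumed in both formulations, the equivalence follows projection by projection.

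The main obstacle is ensuring that the cited projection theorem is stated in exactly the form used. In particular, the modulus for $SM_1$ in \cite{whitt} is defined through the distance to the segment $[x(t_1),x(t_3)]$ in $\R^d$, not through projections. If needed, I would verify directly that $\sup_{\xi}\operatorname{dist}(\xi^\top a,[\xi^\top b,\xi^\top c])$ is comparable to $\operatorname{dist}(a,[b,c])$. The lower bound is trivial. For the upper bound, I would use the separating hyperplane theorem for the convex segment, which gives a unit $\xi$ realising the distance exactly. This makes the uniform-in-$\xi$ reduction exact. The only remaining point is that the statement requires the conditions for each fixed $\xi$ and not uniformly in $\xi$. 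That non-uniformity is precisely what \cite[Thm.~12.7.2]{whitt} permits, so I would rely on that theorem rather than on the hyperplane argument for this step.
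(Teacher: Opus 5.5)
\textbf{Gap: the passage from fixed directions to the whole sphere.} Several parts of your proposal are fine and match the paper. These are the transfer of Conditions~\eqref{it1:classical_charact_tightness_M1}--\eqref{it2:classical_charact_tightness_M1} to projections, the equivalence for each fixed $\xi$ of \eqref{it3:main_thm} with the one-dimensional modulus condition via Theorems~\ref{thm:classical_characterisation_tightness_M1} and~\ref{thm:tightness_criterion_M1}, the necessity direction, and (b)$\Leftrightarrow$(c) via Corollary~\ref{cor:main} with $-\xi$ absorbing the sign $-1$.

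The gap is the step you flag yourself and then hand to the citation. You must deduce \eqref{eq:classical_tightness_condition_modulus} for the $\R^d$-valued $X^n$ from the modulus condition for each fixed $\xi$ separately. By your hyperplane identity, the former means controlling $\sup_{\xi\in\mathbb S^{d-1}}w(\xi^\top X^n,\delta)$ inside a single probability. You assert that \cite[Thm.~12.7.2]{whitt} already permits this non-uniformity. The paper does not use the citation that way: it names exactly this passage as the one additional ingredient beyond that theorem, and proves it.

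This step is also not a formality. Control along any fixed finite set of directions does not suffice. For example, the coordinate axes only give $WM_1$ tightness. So the set of directions must be refined as $\varepsilon$ shrinks, and this is the only part of (b)$\Rightarrow$(a) that is not a direct application of the one-dimensional results. As written, your proof is incomplete there.

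\textbf{How to close it.} The missing argument is short and uses your own hyperplane identity. For real numbers,
\[
\bigl|\operatorname{dist}(a,[b,c])-\operatorname{dist}(a',[b',c'])\bigr|\le|a-a'|+\max(|b-b'|,|c-c'|).
\]
It follows that $|w(\xi^\top x,\delta)-w(\zeta^\top x,\delta)|\le 2|\xi-\zeta|\,\|x\|_\infty$. Hence, for any finite $\rho$-net $N_\rho$ of $\mathbb S^{d-1}$,
\[
w(x,\delta)=\sup_{\xi\in\mathbb S^{d-1}}w(\xi^\top x,\delta)\le\max_{\xi\in N_\rho}w(\xi^\top x,\delta)+2\rho\,\|x\|_\infty .
\]
Now fix $\gamma>0$ and proceed as follows.
\begin{enumerate}
\item Choose $R$ by Condition~\eqref{it1:classical_charact_tightness_M1} so that $\limsup_n\Pro^n(\|X^n\|_\infty\ge R)\le\gamma$.
\item Choose $\rho$ with $2\rho R<\varepsilon/2$.
\item Bound $\Pro^n(w(X^n,\delta)>\varepsilon)$ by $\Pro^n(\|X^n\|_\infty\ge R)+\sum_{\xi\in N_\rho}\Pro^n(w(\xi^\top X^n,\delta)>\varepsilon/2)$.
\end{enumerate}
The finite sum vanishes under $\lim_{\delta\downarrow0}\limsup_n$ by the per-direction conditions. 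This gives \eqref{eq:classical_tightness_condition_modulus} for $X^n$, and the $SM_1$ version of Theorem~\ref{thm:classical_characterisation_tightness_M1} then yields (a).

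This is exactly the paper's argument. With it, the hyperplane identity you set aside is the core of the proof rather than an optional check, and you need no stronger reading of \cite[Thm.~12.7.2]{whitt}.
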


\section{Proofs}
\label{sec:proof_theorem}

\subsection{Outline of the proof of Theorem \ref{thm:tightness_criterion_M1}}

We will outline the proof of Theorem \ref{thm:tightness_criterion_M1} by a sequence of lemmas, all of which are proved in Section \ref{sec:proof_lemmas}. 

The difficulty in proving sufficiency is that the times $s<t<u$ realising $w(X^n,\delta)>\varepsilon$ are in general not stopping times: whether $X^n$ falls back after $t$ depends on the future of the path. We therefore observe $X^n$ along a grid of stopping times. Each grid time is the first exit of $X^n$ from the closed interval centred at the previous grid value, with radius of a fixed fraction of $\varepsilon$. By construction, $X^n$ stays within this radius of the current grid value before reaching the following grid time, and consecutive grid values differ by at least this radius. Consequently, if $X^n$ rises and then falls by more than $\varepsilon$ within a short window, the grid values rise and then fall by more than $\varepsilon$ minus twice the radius. Consider the grid time associated to the first downward step after this peak. There, $X^n$ lies more than $\varepsilon$ minus the radius above its value at the start of the window. At the next grid time, still inside the window, it is lower by at least the radius. This is a reversal at a stopping time, of exactly the type controlled by~\eqref{it3:main_thm}. Since~\eqref{it3:main_thm} controls one stopping time at a time, it remains to bound the number of grid times before $T$. A bounded path cannot make too many consecutive grid steps in the same direction, so the grid reverses within every sufficiently long run of steps. Only a bounded number of disjoint slow reversals fit into $[0,T]$, so a large number of grid times forces many fast reversals, which~\eqref{it3:main_thm} makes unlikely.

We now turn to the formal argument. To begin with, Lemma \ref{lem:stopped_paths_random_windows} ensures the measurability of key quantities required later on.

\begin{lem}[Measurability of stopped values and random-window functionals]\label{lem:stopped_paths_random_windows}
Let $X$ be a real-valued adapted càdlàg process on some filtered probability space $(\Omega,\mathcal F,\mathbb F,\Pro)$ satisfying the usual conditions, let $\rho,\sigma,\tau$ be stopping times bounded by $T$, and let $\delta>0$. Let
\[
\mathscr D:=\sigma\{\pi_t:0\le t\le T\},\qquad \pi_t(x):=x(t),
\]
be the canonical coordinate $\sigma$-algebra on $\D_{\R}[0,T]$.
\begin{enumerate}[(a)]
\item $\rho$ and $X_\rho$ are $\mathcal F_\rho$-measurable, and therefore $\mathcal F_T$-measurable. Moreover, the stopped process $X^\rho:= X_{\rho\wedge \cdot}$ is $\mathcal F_\rho/\mathscr D$-measurable as a map $\Omega\to\D_{\R}[0,T]$.
\item The random-window excursion variables
\[
A_\delta(\tau):=\sup_{(\tau-\delta)\vee0\le s\le\tau} \, (X_\tau-X_s),
\qquad
B_\delta(\tau):=\sup_{(\tau-\delta)\vee0\le s\le\tau} \, (X_s-X_\tau)
\]
are $\mathcal F_\tau$-measurable, and
\[
\mathcal R_\delta(\tau,\sigma):=\sup_{(\tau-\delta)\vee0\le s\le\tau} \
\operatorname{dist}\bigl(X_\tau,[X_s,X_\sigma]\bigr)
\]
is $\mathcal F_{\tau \vee \sigma}$-measurable.
\item The maps $x\mapsto\|x\|_\infty$ and $x\mapsto w(x,\delta)$ and, if $\delta\le T$, the map $x\mapsto\sup_{0\le t\le\delta}\bigl(|x(t)-x(0)|\vee|x(T)-x(T-t)|\bigr)$ are $\mathscr D$-measurable. 
\end{enumerate}
\end{lem}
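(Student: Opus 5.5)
The plan is to reduce every supremum over a (possibly random) interval to a supremum over a countable set. Right-continuity of paths makes this possible, and so does the standard fact that an adapted càdlàg process on a filtered space satisfying the usual conditions is progressively measurable.

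For (a) I would invoke the classical results that a stopping time $\rho$ is $\mathcal F_\rho$-measurable and that $X_\rho$ is $\mathcal F_\rho$-measurable by progressive measurability; the latter follows from the dyadic approximation $\rho_k\downarrow\rho$ together with right-continuity. Since $\rho\le T$, we have $\mathcal F_\rho\subseteq\mathcal F_T$. For the stopped process, it suffices to check that each coordinate $\pi_t\circ X^\rho=X_{\rho\wedge t}$ is $\mathcal F_\rho$-measurable, because $\mathscr D$ is generated by the $\pi_t$. Now $\rho\wedge t$ is a stopping time with $\mathcal F_{\rho\wedge t}\subseteq\mathcal F_\rho$, so this follows from the first part.

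For (b) I would write $A_\delta(\tau)=\sup_{s}(X_\tau-X_{s\wedge\tau})\mathbf 1_{\{s\ge(\tau-\delta)\vee 0\}}$ and reduce the supremum to $s$ ranging over $\Q\cap[0,T]$ together with the endpoint $s=\tau$. This needs some care at the left endpoint $(\tau-\delta)\vee0$, which is included and may not be rational. I would handle it by also including the random time $s_0=(\tau-\delta)\vee0$ and using right-continuity of $X$ at any $s$ in the window. Indeed, every $s\in[s_0,\tau)$ is a limit from the right of rationals in $(s,\tau)$, and $s=\tau$ is included separately. Each term $X_{q\wedge\tau}\mathbf 1_{\{q\ge s_0\}}$ is $\mathcal F_\tau$-measurable by (a), since $q\wedge\tau$ is a stopping time dominated by $\tau$ and $s_0$ is $\mathcal F_\tau$-measurable. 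The value $X_{s_0}$ needs a little more: $s_0$ is not a stopping time in general, so I cannot simply quote (a). Instead I use progressive measurability, under which $(\omega,s)\mapsto X_s(\omega)$ restricted to $[0,\tau]$ is measurable with respect to $\mathcal F_\tau\otimes\mathcal B$. Concretely, $X_{s_0\wedge\tau}=\lim_k X_{(\lceil 2^k s_0\rceil 2^{-k})\wedge\tau}$, and each approximant is $\mathcal F_\tau$-measurable because it is a countable sum of $\mathcal F_\tau$-measurable pieces. The same argument gives $B_\delta(\tau)$. For $\mathcal R_\delta$ I would use $\operatorname{dist}(a,[b,c])=(a-b\vee c)^+\vee(b\wedge c-a)^+$, which is continuous. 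Taking the supremum over the same countable family, with $X_\tau,X_\sigma$ both $\mathcal F_{\tau\vee\sigma}$-measurable, gives $\mathcal F_{\tau\vee\sigma}$-measurability, since $\mathcal F_\tau,\mathcal F_\sigma\subseteq\mathcal F_{\tau\vee\sigma}$.

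For (c) the argument is deterministic. Every $x\in\D_{\R}[0,T]$ is right-continuous and continuous at $T$, so I may take $\|x\|_\infty=\sup_{q\in(\Q\cap[0,T])\cup\{T\}}|x(q)|$. Similarly, $\sup_{0\le t\le\delta}(\cdots)$ reduces to rational $t$ plus the endpoints $t=0$ and $t=\delta$. Using $x(T-t)$ requires left limits, but approximating $T-t$ from the right by rationals $T-q$ with $q\downarrow t$ gives the supremum. For $w(x,\delta)$, I would show that the supremum over triples $t_1\le t_2\le t_3$ in windows of length at most $2\delta$ equals the supremum over rational triples (plus $T$). Each $t_i$ is approximated from the right, using continuity of $\operatorname{dist}$ in its arguments. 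The window constraint $t_3-t_1\le2\delta$ and the cutoffs at $0$ and $T$ are preserved when $t_1,t_3$ are moved right, provided we move $t_1$ by at least as much as $t_3$ (or keep $t_3=T$). \textbf{This last constraint bookkeeping is the main obstacle.} Moving points right can violate the ordering or the window length, so I would approximate $t_3$ first, then choose $t_2$ and then $t_1$ closer, keeping $t_1^q\le t_2^q\le t_3^q$ and $t_3^q-t_1^q\le t_3-t_1$. In the case $t_1=t_2$ or $t_2=t_3$, the same rational approximant is used for both.
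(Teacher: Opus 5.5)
Your proposal is correct and follows essentially the paper's route: dyadic approximation and coordinatewise measurability for (a), countable suprema justified by right-continuity for (b), and right-approximation of triples by points of $(\Q\cap[0,T])\cup\{T\}$, with $q_1$ shifted at least as far as $q_3$, for (c). The paper merely packages (b) through a jointly measurable functional $S(y,u)$ and the identity $\mathcal R_\delta=(A_\delta\wedge(X_\tau-X_\sigma)^+)\vee(B_\delta\wedge(X_\sigma-X_\tau)^+)$, and your separate treatment of $X_{s_0}$ is unnecessary because $s_0$ either equals $\tau$ or is approximated from the right by rationals in $(s_0,\tau)$; the one real slip is that for $|x(T)-x(T-t)|$ you need $q\uparrow t$ (so that $T-q\downarrow T-t$), not $q\downarrow t$, which would only capture left limits, and the paper sidesteps this by reparametrising to $s\in[T-\delta,T]$.
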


The necessity implication is direct. Assume that $(X^n)_{n\ge1}$ is tight in $(\D_{\R}[0,T],M_1)$. By Theorem
\ref{thm:classical_characterisation_tightness_M1}, Conditions
\eqref{it1:classical_charact_tightness_M1}--\eqref{it3:classical_charact_tightness_M1} hold. Fix $\varepsilon>0$, $\delta>0$,
$n\ge1$, and stopping times $\tau,\sigma\in\mathcal T^n$. By Lemma~\ref{lem:stopped_paths_random_windows}(b), $\mathcal R_\delta^n(\tau,\sigma)$ is $\mathcal F_T^n$-measurable. Moreover, pointwise, it clearly holds
\[
\mathcal R_\delta^n(\tau,\sigma)\, \le \, w(X^n,\delta), \qquad \text{ if } \tau\le\sigma\le(\tau+\delta)\wedge T.
\]
Hence it follows
\[
\sup_{\substack{\tau, \sigma\in\mathcal T^n\\ \tau\le\sigma\le(\tau+\delta)\wedge T}} \mathbb P^n\bigl(\mathcal R_\delta^n(\tau,\sigma)>\varepsilon\bigr)
\; \le \; 
\sup_{\substack{\tau, \sigma\in\mathcal T^n\\ \tau\le\sigma\le(\tau+\delta)\wedge T}} \mathbb P^n\bigl(w(X^n,\delta)>\varepsilon\bigr),
\]
and we deduce \eqref{it3:main_thm}.

We now prove the sufficiency implication.
Assume Conditions \eqref{it1:classical_charact_tightness_M1} and
\eqref{it2:classical_charact_tightness_M1}, as well as \eqref{it3:main_thm}. By Theorem
\ref{thm:classical_characterisation_tightness_M1}, it remains to prove
Condition \eqref{it3:classical_charact_tightness_M1}. To parameterise precisely the stopping-time bounds used below, for $\theta>0$ and $c>0$ define
\[
\alpha_n(\theta;c)
:=
\sup_{\substack{\tau, \sigma\in\mathcal T^n\\ \tau\le\sigma\le(\tau+\theta)\wedge T}}
\mathbb P^n\!\left(
\mathcal R_\theta^n(\tau,\sigma)>c
\right).
\]
Condition~\eqref{it3:main_thm} states precisely that, for every fixed $c>0$,
\begin{align}
\lim_{\theta\downarrow0}\ \limsup_{n\to\infty}\ \alpha_n(\theta;c) \, = \, 0. \label{eq:stopping_time_condition}
\end{align}

Heuristically, we will proceed as follows: we will discretise each path by an oscillation grid. On a bounded path, a large number of grid increments eventually forces sign reversals. The stopping-time Condition \eqref{eq:stopping_time_condition} makes reversals occurring within short time windows unlikely, while at the same time sufficiently many slow reversals cannot fit inside $[0,T]$. A final covering argument then controls the $M_1$ modulus.

To formalise this, we fix $\varepsilon>0$ and put
\[
a:=\varepsilon/4,\qquad b:=a/2.
\]
We then introduce the $a$--oscillation grid by $\tau_0^n:=0$ and random times
\begin{align}
\tau_{k+1}^n
:=\inf\big\{t>\tau_k^n \, : \, |X_t^n-X^n_{\tau_k^n}|>a\big\} \wedge T,
\qquad k\ge0, \label{eq:defi_stopping_times}
\end{align}
with $\inf\emptyset=\infty$. By induction on $k$, $\tau^n_k \in \mathcal{T}^n$ for all $k\ge 0$, $n\ge 1$. Indeed, if $\tau^n_k\in\mathcal T^n$, then for $t\in(0,T]$ right-continuity of the paths and Lemma~\ref{lem:stopped_paths_random_windows}(a) give
\[
\{\tau^n_{k+1}<t\}=\bigcup_{q\in\Q\cap(0,t)}\Bigl(\{\tau^n_k<q\}\cap\bigl\{|X^n_q-X^n_{\tau^n_k}|>a\bigr\}\Bigr)\in\mathcal F^n_t,
\]
and right-continuity of $\mathbb F^n$ yields $\{\tau^n_{k+1}\le t\}\in\mathcal F^n_t$ for $t<T$. Moreover, by right-continuity of the paths,
    \[
    \bigl|X^n_{\tau_{k+1}^n}-X^n_{\tau_k^n}\bigr| \, \ge \, a \qquad \text{on } \{\tau^n_{k+1}<T\}.
    \]
For $\theta>0$, define the auxiliary stopping time
\[
\sigma_{k,\theta}^n
:=\tau_{k+1}^n\wedge(\tau_k^n+\theta), \qquad k\ge0, n\ge 1.
\]
Clearly, $\sigma_{k,\theta}^n\in\mathcal T^n$ and
$
\tau_k^n\le\sigma_{k,\theta}^n\le(\tau_k^n+\theta)\wedge T.
$
Lastly, define the event
\[
H_{k,\theta}^n
:=
\Big\{
\mathcal R_\theta^n\bigl(\tau_k^n,\sigma_{k,\theta}^n\bigr)>b
\Big\}.
\]
By Lemma~\ref{lem:stopped_paths_random_windows}(b), $H_{k,\theta}^n\in\mathcal F_T^n$, and the definition of $\alpha_n(\theta;b)$ together with the preceding admissibility relation gives
\begin{align}
\mathbb P^n(H_{k,\theta}^n)\, \le \, \alpha_n(\theta;b). \label{eq:upper_bound_H_k_theta}
\end{align}
The next lemma shows that a bounded path with many $a$-oscillations must exhibit at least one rapid sign reversal.

\begin{lem}
    \label{lem:small:proba}
Let $R>0$, $M\ge 2$, and $\theta>0$ such that $Ma>2R$, and set $q:=M(\lfloor 2T/\theta\rfloor+1)+1$. Then for every $n\ge1$,
\begin{equation}\label{eq:many_grid_points}
\mathbb P^n\bigl(\tau_q^n< T,\ \|X^n\|_\infty\le R\bigr)
\, \le \, 2M\alpha_n(\theta;b).
\end{equation}
\end{lem}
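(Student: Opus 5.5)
The plan is a deterministic pigeonhole argument on the grid $(\tau_k^n)$, followed by a reduction to at most $2M$ applications of the bound $\mathbb P^n(H^n_{k,\theta})\le\alpha_n(\theta;b)$ from \eqref{eq:upper_bound_H_k_theta} (or its analogue at a stopping time built from the grid).

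\emph{Step 1 (pathwise: long monotone runs are impossible).} Write $\Delta_k:=X^n_{\tau^n_{k+1}}-X^n_{\tau^n_k}$. On $\{\tau^n_q<T,\ \|X^n\|_\infty\le R\}$ we have $|\Delta_k|\ge a$ for all $k<q$. Suppose $M$ consecutive increments had the same sign. Their sum would have modulus at least $Ma>2R$, which contradicts $\|X^n\|_\infty\le R$. Now split the indices $\{1,\dots,q-1\}$ into $L:=\lfloor 2T/\theta\rfloor+1$ consecutive blocks of length $M$. Each block then contains a reversal index $k$, meaning that $\Delta_{k-1}$ and $\Delta_k$ have opposite signs. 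This gives reversal indices $k_1<\dots<k_L$.

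\emph{Step 2 (pathwise: one reversal is fast).} Consider the intervals $[\tau^n_{k_j-1},\tau^n_{k_j+1}]$, $j=1,\dots,L$. Since the $k_j$ are strictly increasing, every point of $[0,T]$ lies in at most two of these intervals. Hence the sum of their lengths is at most $2T$, and some $j$ satisfies $\tau^n_{k_j+1}-\tau^n_{k_j-1}\le 2T/L<\theta$.

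For this $k=k_j$ we then have $\tau^n_{k-1}\ge\tau^n_k-\theta$ and $\sigma^n_{k,\theta}=\tau^n_{k+1}$. Say $\Delta_{k-1}\ge a$ and $\Delta_k\le -a$. Then $X^n_{\tau_k}$ exceeds both $X^n_{\tau_{k-1}}$ and $X^n_{\tau_{k+1}}$ by at least $a>b$. Since $X^n_{\tau_{k-1}}$ is a value in the window $[\tau^n_k-\theta,\tau^n_k]$, this gives $\mathcal R^n_\theta(\tau^n_k,\sigma^n_{k,\theta})\ge a>b$, that is, $H^n_{k,\theta}$ occurs. The case $\Delta_{k-1}\le-a$, $\Delta_k\ge a$ is symmetric.

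\emph{Step 3 (reducing the union bound to $2M$ terms; the main obstacle).} Steps 1 and 2 show that the event in \eqref{eq:many_grid_points} is contained in $\bigcup_{k<q}H^n_{k,\theta}$. A plain union bound over $k$, however, only gives $q\,\alpha_n(\theta;b)$, whereas the statement asks for $2M\alpha_n(\theta;b)$, with $M$ independent of $\theta$. To get there, I would exploit that \eqref{eq:stopping_time_condition} controls \emph{arbitrary} stopping times, not just the grid times. So I would replace deterministic grid indices by randomly indexed grid times that are still stopping times.

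Concretely, for $\xi\in\{\pm1\}$ and $i\in\{1,\dots,M\}$ I would define $\rho_{\xi,i}$ as the grid time $\tau^n_K$ with $K$ chosen by an $\mathcal F^n$-adapted rule. The first candidate rule is: $K$ is the index such that a fast step $\xi\Delta_{K-1}\ge a$ with $\tau^n_K-\tau^n_{K-1}<\theta$ has just been observed, and $K$ is the $i$-th such index within the current monotone run. Whether such a $K$ qualifies is decided at time $\tau^n_K$, so $\rho_{\xi,i}$ is a stopping time; I would set $\rho_{\xi,i}=T$ when no such index exists. I would pair $\rho_{\xi,i}$ with $\sigma:=$ the next grid time $\wedge(\rho_{\xi,i}+\theta)$.

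The key combinatorial claim to verify is that the fast reversal found in Step 2 is necessarily detected by one of these $2M$ pairs $(\rho_{\xi,i},\sigma)$. The reason to expect this is that the run ending at the reversal has length less than $M$, so the position of the reversal inside its run takes at most $M$ values for each sign $\xi$.

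If this indexing turns out not to give a genuine stopping time, I would try the following fallback instead. Let $\rho_{\xi,i}$ be the $i$-th grid time after the \emph{first} fast $\xi$-step. Then argue from the block structure of Step 1 that the first fast reversal occurs within $M$ grid steps of the first fast step of the same sign. Once the $2M$ stopping-time pairs are in place, each contributes at most $\alpha_n(\theta;b)$ by Lemma~\ref{lem:stopped_paths_random_windows}(b) and the definition of $\alpha_n$, which yields \eqref{eq:many_grid_points}.
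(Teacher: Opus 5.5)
Your Steps 1 and 2 are correct, and you identify the real difficulty. Step 3, however, is a genuine gap. It rests on a pathwise claim: that on $E_n:=\{\tau^n_q<T,\ \|X^n\|_\infty\le R\}$ the fast reversal is caught by one of $2M$ pairs $(\rho,\sigma)$ of stopping times. That claim is false for both of your rules, and there is no reason to expect any $\theta$-independent family to work. A stopping time must commit before it can see whether a reversal follows, and a path in $E_n$ can present $\asymp T/\theta$ decoys before its first fast reversal.

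Concretely, take a bounded path with two phases:
\begin{itemize}
\item In a first phase it makes $\asymp T/\theta$ grid steps of both signs, many of them fast, but every change of direction is made by a slow drift spread over time $3\theta$. Hence $\mathcal R^n_\theta\le a/3<b$ at every grid time of this phase.
\item Only afterwards does it oscillate rapidly to complete its $q$ grid steps before $T$.
\end{itemize}
This path lies in $E_n$, and its first fast steps of either sign occur among the first few grid steps. Its first fast reversal, however, comes $\asymp T/\theta\gg M$ grid steps later. So all $2M$ stopping times of your fallback land in the first phase and detect nothing. Your first rule fails on the same path: \emph{the $i$-th fast $\xi$-step of the current run} occurs in many runs, and a single stopping time can commit to only one of them.

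What is missing is that on $E_n$ there are not one but more than $L/2$ fast reversals. The $\theta$-independent constant comes from averaging over them, not from a clever choice of stopping times. Use the block intervals $[\tau^n_{jM},\tau^n_{(j+1)M}]$, $j=0,\dots,L-1$, rather than your overlapping reversal windows. Those windows can have total length up to $2T$, which with $L=\lfloor 2T/\theta\rfloor+1$ only guarantees a single fast reversal. The block intervals are disjoint up to endpoints, with total length $\tau^n_{LM}<T$, so fewer than $T/\theta<L/2$ of them last longer than $\theta$. In each remaining block, the reversal from Step 1 has its window $[\tau^n_{k-1},\tau^n_{k+1}]$ inside the block. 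That reversal is therefore fast, and $H^n_{k,\theta}$ occurs for some $jM<k<(j+1)M$. Hence
\[
\frac L2\,\mathbf 1_{E_n}\;\le\;\sum_{j=0}^{L-1}\mathbf 1_{F^n_j},\qquad F^n_j:=\bigcup_{k=jM+1}^{(j+1)M-1}H^n_{k,\theta}.
\]
Taking expectations with $\mathbb P^n(F^n_j)\le M\alpha_n(\theta;b)$ gives $\mathbb P^n(E_n)\le\frac 2L\cdot LM\,\alpha_n(\theta;b)=2M\alpha_n(\theta;b)$. This Markov-inequality step over the blocks is exactly the paper's argument, and it is where the factor $2$ in $q$ is used.
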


We now prove~\eqref{it3:classical_charact_tightness_M1}. Let $\eta>0$. Choose $R>0$ such that
\begin{align}
\limsup_{n\to\infty} \ \mathbb P^n(\|X^n\|_\infty>R) \,  \le \, \eta/4. \label{eq:key_estimation_1}
\end{align}
Let $M\ge 2$ such that $Ma>2R$. Then by \eqref{eq:stopping_time_condition} choose $\theta_0>0$ such that
\[
\limsup_{n\to\infty} \ \alpha_n(\theta_0;b) \, < \, \frac{\eta}{8M},
\]
and let $q:=M(\lfloor 2T/\theta_0\rfloor+1)+1$ be given by Lemma \ref{lem:small:proba} applied with $\theta=\theta_0$, which gives us
\begin{align}
\limsup_{n\to\infty} \
\mathbb P^n\bigl(\tau_q^n< T,\ \|X^n\|_\infty\le R\bigr)
\, \le \, 2M\limsup_{n\to\infty}\ \alpha_n(\theta_0;b) \label{eq:key_estimation_2}
\, <\, \eta/4.
\end{align}
Further, again by \eqref{eq:stopping_time_condition}, and by Condition \eqref{it2:classical_charact_tightness_M1}, we can choose, for this fixed $q$, $0 < \theta < \theta_0$ such that
\begin{align}
\limsup_{n\to\infty}\ \big[ q\,\alpha_n(\theta;b) + \Pro^n(\sup_{0\le t\le\theta}
|X_{T}^n-X_{T-t}^n| > a)\big] \, < \,\eta/4. \label{eq:key_estimation_3}
\end{align}
This two-scale choice is the crux of the argument: $q$ is fixed through $\theta_0$ before $\theta$ is chosen, and $\theta<\theta_0$ is then taken small enough to compensate for the factor $q$ arising below from the union bound over the events $H^n_{1,\theta},\ldots,H^n_{q-1,\theta}$.

We then require the following final covering.
\begin{lem}
    \label{lemma:last}
Let $\theta>0$, $q\ge 1$ and $R>0$. For every $n\ge1$ and every $\delta\in(0,\theta/2 ]$,
\begin{align}
\{w(&X^n,\delta)>\varepsilon\} \notag \\
&\subseteq \, 
\{\|X^n\|_\infty>R\}
\cup
\{\tau_q^n< T,\ \|X^n\|_\infty\le R\} \cup \{\sup_{0\le t\le\theta}
|X_{T}^n-X_{T-t}^n| > a\} \cup 
\bigcup_{k=1}^{q-1}H_{k,\theta}^n. \label{eq:pathwise_cover}
\end{align}
\end{lem}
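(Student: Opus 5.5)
The plan is to argue by contraposition, path by path. Fix $n$, $\delta\in(0,\theta/2]$ and $\omega$ with $w(X^n,\delta)>\varepsilon$. Assume that $\|X^n\|_\infty\le R$, that $\tau^n_q=T$ (so $\tau^n_q<T$ fails), and that $\sup_{0\le t\le\theta}|X^n_T-X^n_{T-t}|\le a$. Under these assumptions I will produce some $k\in\{1,\dots,q-1\}$ with $\omega\in H^n_{k,\theta}$. Write $x=X^n(\omega)$ and $\tau_k=\tau^n_k(\omega)$.

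\emph{Setup.} Because $\tau_q=T$, the intervals $[\tau_k,\tau_{k+1})$, $0\le k<q$, cover $[0,T)$. On each such interval, $|x(t)-x(\tau_k)|\le a$ by the definition \eqref{eq:defi_stopping_times}. Moreover, whenever $\tau_{k+1}<T$ we have $|x(\tau_{k+1})-x(\tau_k)|\ge a$. Pick $t_1\le t_2\le t_3$ with $t_3-t_1\le 2\delta\le\theta$ and $\operatorname{dist}(x(t_2),[x(t_1),x(t_3)])>\varepsilon=4a$. By symmetry (replace $x$ by $-x$; the event $H$ is symmetric) we may assume $x(t_2)-x(t_1)>4a$ and $x(t_2)-x(t_3)>4a$. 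Then $t_1<t_2<t_3$, so $t_1,t_2<T$.

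\emph{Endpoint at $T$.} If $t_3=T$, I replace $t_3$ by some $t_3'\in(t_2,T)$. The end condition gives $|x(t_3')-x(T)|\le a$, hence $x(t_2)-x(t_3')>3a$. Thus in all cases we may take $t_3<T$ at the price of the weaker inequality $x(t_2)-x(t_3)>3a$. Let $k_i$ be the index with $t_i\in[\tau_{k_i},\tau_{k_i+1})$, so that $k_1\le k_2\le k_3\le q-1$. The interval bound yields:
\begin{itemize}
\item $x(\tau_{k_2})\ge x(t_2)-a$;
\item $x(\tau_{k_1})\le x(t_1)+a$;
\item $x(\tau_{k_3})\le x(t_3)+a$.
\end{itemize}
Hence $x(\tau_{k_2})-x(\tau_{k_1})>2a>0$ and $x(\tau_{k_2})-x(\tau_{k_3})>a>0$.

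\emph{Choice of the reversal index.} Let $k^*$ be the largest index in $\{k_1,\dots,k_3\}$ maximising $k\mapsto x(\tau_k)$. From the two strict inequalities above, $k_1<k^*<k_3$, so $1\le k^*\le q-2$. Since $\tau_{k^*+1}\le\tau_{k_3}\le t_3<T$, the step from $\tau_{k^*}$ to $\tau_{k^*+1}$ has size at least $a$. By maximality this step cannot go up, so $x(\tau_{k^*+1})\le x(\tau_{k^*})-a$.

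\emph{Locating the window.} Since $k^*>k_1$, we have $t_1<\tau_{k_1+1}\le\tau_{k^*}$. In the other direction, $\tau_{k^*+1}\le t_3\le t_1+\theta\le\tau_{k^*}+\theta$. Consequently $t_1\in[(\tau_{k^*}-\theta)\vee0,\tau_{k^*}]$ and $\sigma^n_{k^*,\theta}=\tau_{k^*+1}$.

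\emph{Conclusion.} Using $x(\tau_{k^*})\ge x(\tau_{k_2})\ge x(t_2)-a$, we get $x(\tau_{k^*})-x(t_1)>3a$. Together with $x(\tau_{k^*})-x(\sigma^n_{k^*,\theta})\ge a$, this gives
\[
\mathcal R^n_\theta(\tau_{k^*},\sigma^n_{k^*,\theta})\ \ge\ \operatorname{dist}\bigl(x(\tau_{k^*}),[x(t_1),x(\tau_{k^*+1})]\bigr)\ \ge\ a>b,
\]
that is, $\omega\in H^n_{k^*,\theta}$.

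The main obstacle is the bookkeeping that places $t_1$ inside the backward window of length $\theta$ at $\tau_{k^*}$ and ensures $\sigma^n_{k^*,\theta}$ is not truncated. Both rely on $t_3-t_1\le2\delta\le\theta$ and on the strict ordering $k_1<k^*<k_3$. The endpoint case $t_3=T$ also requires care, since $T$ lies in none of the grid intervals; it is handled by the boundary event in \eqref{eq:pathwise_cover}, at the cost of one unit $a$ in the margin.
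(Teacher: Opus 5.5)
Your proof is correct and follows essentially the same route as the paper. You place the three times in grid intervals and use the $a$-control inside each interval to transfer the $\varepsilon$-reversal to grid values. You then pick a downward grid step after the peak that falls inside the $\theta$-window, so that $H^n_{k,\theta}$ occurs; you take the last maximiser of $k\mapsto x(\tau_k)$, whereas the paper takes the first negative increment after $m=i(t)$, and the two play the same role.

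The only other difference is minor. The paper notes that $t_3=T$ cannot occur on the complement event, because $T-t_2\le\theta$ forces $|x(t_2)-x(T)|\le a<\varepsilon$. Your replacement of $t_3$ by some $t_3'<T$ is therefore valid but unnecessary.
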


By \eqref{eq:upper_bound_H_k_theta},
\[
\sum_{k=1}^{q-1}\mathbb P^n(H_{k,\theta}^n) \, 
\le \, q\alpha_n(\theta;b).
\]
Therefore, for all $0<\delta\le\theta/2$, exploiting \eqref{eq:key_estimation_1}, \eqref{eq:key_estimation_2}, and \eqref{eq:key_estimation_3} gives us
\[
\limsup_{n\to\infty} \ \mathbb P^n(w(X^n,\delta)>\varepsilon)
\, \le \, \eta/4+\eta/4+\eta/4 \,< \, \eta .
\]
Since $\delta\mapsto w(x,\delta)$ is nondecreasing for every $x$, the function
\[
\delta\, \longmapsto \, \limsup_{n\to\infty}\ \mathbb P^n\bigl(w(X^n,\delta)>\varepsilon\bigr)
\]
is nondecreasing. Hence its limit as $\delta\downarrow0$ exists, and the preceding bound, valid for every $\delta\le\theta/2$, shows that this limit is at most $\eta$. Since $\eta>0$ was arbitrary,
\[
\lim_{\delta\downarrow0}\, \limsup_{n\to\infty}
\ \mathbb P^n(w(X^n,\delta)>\varepsilon) \, = \, 0,
\]
which is \eqref{it3:classical_charact_tightness_M1}. We deduce the tightness of $(X^n)_{n\ge 1}$ in $(\D_{\R}[0,T],M_1)$.

\subsection{Proof of the invoked lemmas}
\label{sec:proof_lemmas}

\begin{proof}[Proof of Lemma \ref{lem:stopped_paths_random_windows}]

Let $Q_T:=(\Q\cap[0,T])\cup\{T\}$. Since every $s\in[u,v)$ is the limit of a sequence of rationals in $(s,v)$ decreasing to $s$, every right-continuous $y:[0,T]\to\R$ satisfies, for $0\le u\le v\le T$,
\begin{equation}\label{eq:countable_sup}
\sup_{u\le s\le v}y(s)=\sup\bigl(\{y(v)\}\cup\{y(q):q\in\Q\cap[u,v)\}\bigr).
\end{equation}

(a) Fix $t\in[0,T]$. Since $\{\rho\wedge t\le r\} = \{\rho\le r\}\in\mathcal F_r\subseteq\mathcal F_t$
for $r<t$ and $\{\rho\wedge t\le r\}  = \Omega$ for $r\ge t$, the random variable $\rho\wedge t$ is $\mathcal F_t$-measurable. Hence, for every $m\ge1$, $\rho_m:=\bigl(2^{-m}\lceil 2^m(\rho\wedge t)\rceil\bigr)\wedge t$ is $\mathcal F_t$-measurable with values in the finite set 
$$V_m:=\{k2^{-m}:k\in\N\cup\{0\},\ k2^{-m}<t\}\cup\{t\},$$
so that $X_{\rho_m}=\sum_{v\in V_m}X_v\mathbf 1_{\{\rho_m=v\}}$ is $\mathcal F_t$-measurable. As $\rho\wedge t\le\rho_m\le\rho\wedge t+2^{-m}$, right-continuity of the paths gives $X_{\rho_m}\to X_{\rho\wedge t}$, so $X_{\rho\wedge t}$ is $\mathcal F_t$-measurable. Since $\rho=\rho\wedge t$ on $\{\rho\le t\}$, for every Borel set $B\subseteq\R\times[0,T]$,
\[
\{(X_\rho,\rho)\in B\}\cap\{\rho\le t\}=\{(X_{\rho\wedge t},\rho\wedge t)\in B\}\cap\{\rho\le t\}\in\mathcal F_t.
\]
As $\{\rho\le t\}=\{\rho\le T\}$ for $t\ge T$, $X_\rho$ and $\rho$ are $\mathcal F_\rho$-measurable.\\
Now, the stopped path $X^\rho(t)=X_{t\wedge\rho(\omega)}(\omega)$, $0\le t\le T$, is càdlàg, and its coordinates $X_{t\wedge\rho}$ are $\mathcal F_{t\wedge\rho}$-measurable and hence $\mathcal F_\rho$-measurable. Since $\mathscr D$ is generated by the coordinate maps, $X^\rho$ is $\mathcal F_\rho/\mathscr D$-measurable. \\

(b) For $y\in\D_{\R}[0,T]$ and $u\in[0,T]$, \eqref{eq:countable_sup} with $v=T$ gives
\[
S(y,u):=\sup_{u\le s\le T}y(s)=\sup_{q\in Q_T}\bigl(y(q)\mathbf 1_{\{u\le q\}}+y(T)\mathbf 1_{\{u>q\}}\bigr).
\]
Since $Q_T$ is countable, $S$ is $\mathscr D\otimes\mathcal B([0,T])$-measurable. Let $L:=(\tau-\delta)\vee0\le\tau$. Since $X^\tau_s=X_s$ for $s\le\tau$ and $X^\tau_s=X_\tau$ for $s\ge\tau$,
\[
A_\delta(\tau)=X^\tau(T)+S(-X^\tau,L),\qquad B_\delta(\tau)=S(X^\tau,L)-X^\tau(T),
\]
and these are $\mathcal F_\tau$-measurable because, making use of (a), $\omega\mapsto\bigl(\pm X^\tau(\omega),L(\omega)\bigr)$ is $\mathcal F_\tau/(\mathscr D\otimes\mathcal B([0,T]))$-measurable. Next, for $a,b,c\in\R$ the segment $[b,c]$ is the interval $[b\wedge c,b\vee c]$, whence
\begin{equation}\label{eq:dist_formula}
\operatorname{dist}(a,[b,c])
=(a-(b\vee c))^+\vee((b\wedge c)-a)^+
=\bigl((a-b)^+\wedge(a-c)^+\bigr)\vee\bigl((b-a)^+\wedge(c-a)^+\bigr).
\end{equation}
We take $a=X_\tau$, $b=X_s$, $c=X_\sigma$ and the supremum over $s\in[L,\tau]$. Since $z\mapsto z^+$ and $z\mapsto z\wedge\alpha$ are continuous and nondecreasing, and $A_\delta(\tau),B_\delta(\tau)\ge0$, we obtain
\[
\mathcal R_\delta(\tau,\sigma)
=\bigl(A_\delta(\tau)\wedge(X_\tau-X_\sigma)^+\bigr)\vee\bigl(B_\delta(\tau)\wedge(X_\sigma-X_\tau)^+\bigr),
\]
which is $\mathcal F_{\tau \vee \sigma}$-measurable by (a).\\

(c) By \eqref{eq:countable_sup}, applied to $|x|$ on $[0,T]$, to $|x(\cdot)-x(0)|$ on $[0,\delta]$ and to $|x(T)-x(\cdot)|$ on $[T-\delta,T]$, the first and the third map in (c) are countable suprema of $\mathscr D$-measurable maps. Next, times $0\le t_1\le t_2\le t_3\le T$ lie in a common window $[(t-\delta)\vee0,(t+\delta)\wedge T]$, $t\in[0,T]$, if and only if $t_3-t_1\le2\delta$, and $d_x(t_1,t_2,t_3):=\operatorname{dist}\bigl(x(t_2),[x(t_1),x(t_3)]\bigr)\ge0$ vanishes if $t_2\in\{t_1,t_3\}$. Let $\Delta$ be the countable, non-empty set of triples $(q_1,q_2,q_3)\in Q_T^3$ with $q_1<q_2<q_3$ and $q_3-q_1\le2\delta$. Each map $x\mapsto d_x(q_1,q_2,q_3)$ is $\mathscr D$-measurable by \eqref{eq:dist_formula}. Hence it suffices to show $d_x(t_1,t_2,t_3)\le\sup_\Delta d_x$ whenever $t_1<t_2<t_3$ and $t_3-t_1\le2\delta$, since then $w(x,\delta)=\sup_\Delta d_x$. For $k\ge 1$ with $2/k<(t_2-t_1)\wedge(t_3-t_2)$, choose $q_3^k\in Q_T\cap[t_3,t_3+1/k)$, then $q_1^k\in\Q\cap(t_1+h_k,t_1+h_k+1/k)$, where $h_k:=q_3^k-t_3$, and $q_2^k\in\Q\cap(t_2,t_2+1/k)$. Then $t_1<q_1^k<t_2<q_2^k<t_3\le q_3^k$ and $q_3^k-q_1^k<t_3-t_1\le2\delta$, so that $(q_1^k,q_2^k,q_3^k)\in\Delta$; moreover, $q_i^k\ge t_i$ and $q_i^k\to t_i$. By right-continuity of $x$ and continuity of \eqref{eq:dist_formula} in $(a,b,c)$, $d_x(q_1^k,q_2^k,q_3^k)\to d_x(t_1,t_2,t_3)$.
\end{proof}

\begin{proof}[Proof of Lemma \ref{lem:small:proba}]
    Take $B = \lfloor 2T/\theta\rfloor+1$, and set $q:=BM+1$. On $\{\tau_q^n< T,\ \|X^n\|_\infty\le R\}$, each block
\[
\tau_{jM}^n,\tau_{jM+1}^n,\ldots,\tau_{(j+1)M}^n,
\qquad j=0,\ldots,B-1,
\]
contains at least one sign reversal among consecutive grid increments. Indeed, if all
$M$ increments in the block had the same sign, the value of the path would
change by at least $Ma>2R$, which is impossible on $\{\lVert X^n\rVert_\infty \le R\}$. 
For $k\ge1$, define
\[
\Delta_k^n:=X^n_{\tau_k^n}-X^n_{\tau_{k-1}^n}.
\]
Let $F_j^n$ be the event that, for some
$k\in\{jM+1,\ldots,(j+1)M-1\}$,
\[
\tau_{k+1}^n< T,\qquad
\tau_{k+1}^n-\tau_{k-1}^n\le\theta,
\qquad
\Delta_k^n\Delta_{k+1}^n<0.
\]
This event is $\mathcal F_T^n$-measurable by Lemma~\ref{lem:stopped_paths_random_windows}(a). On $\{\tau_{k+1}^n< T\}$, both increments $\Delta_k^n$ and $\Delta_{k+1}^n$ have absolute value at least $a$. Thus $\Delta_k^n\Delta_{k+1}^n<0$ means exactly that the two consecutive increments have opposite signs, and $X^n_{\tau_k^n}$ lies at distance at least $a$ from the interval with endpoints $X^n_{\tau_{k-1}^n}$ and $X^n_{\tau_{k+1}^n}$. If $F_j^n$ occurs through an index $k$, then obviously $\sigma_{k,\theta}^n=\tau_{k+1}^n$ and
$\tau^n_k-\tau_{k-1}^n< \theta$. Since $a>b$, this implies that $H_{k,\theta}^n$ occurs. Hence
\[
F_j^n \, \subseteq \, \bigcup_{k=jM+1}^{(j+1)M-1}H_{k,\theta}^n,
\]
and therefore
\[
\mathbb P^n(F_j^n) \, \le \,M \alpha_n(\theta;b)
\]
by \eqref{eq:upper_bound_H_k_theta}.

Let $E_n:=\{\tau_q^n< T,\ \|X^n\|_\infty\le R\}$. On $E_n$, if $F_j^n$ does not occur, then the $j$-th block contains a reversal between some grid times $\tau^n_{k-1}, \tau^n_k$, and $\tau^n_{k+1}$, but every such reversal satisfies $\tau_{k+1}^n-\tau_{k-1}^n>\theta$. Since $\tau_{jM}^n\le \tau_{k-1}^n< \tau_{k+1}^n\le\tau_{(j+1)M}^n$ on $E_n$, the duration of that block is therefore greater than $\theta$. The $B$ block intervals are consecutive and disjoint apart from their endpoints, and on $E_n$ their total duration is
\[
\sum_{j=0}^{B-1}\bigl(\tau_{(j+1)M}^n-\tau_{jM}^n\bigr)
\, = \,\tau_{BM}^n \, \le \, \tau_q^n \, < \, T.
\]
Hence fewer than $T/\theta$ blocks can have duration greater than $\theta$. Since $B\theta>2T$, one has $T/\theta<B/2$, and thus on $E_n$ more than $B/2$ of the events $F_j^n$ must occur. We deduce that, pointwise, it holds
\[
\frac B2\,\mathbf1_{E_n}
\, \le \, \sum_{j=0}^{B-1}\mathbf1_{F_j^n}.
\]
Taking expectations and using the preceding bound on $\mathbb P^n(F_j^n)$ gives
\[
\mathbb P^n(E_n)
\, \le \, \frac2B\sum_{j=0}^{B-1}\mathbb P^n(F_j^n)
\, \le \, 2M\alpha_n(\theta;b),
\]
which proves~\eqref{eq:many_grid_points}.
\end{proof}

\begin{proof}[Proof of Lemma \ref{lemma:last}]

To prove the claim, let us consider the events
\begin{align*}
\big( \{ \|X^n\|_\infty> &R\} \, \cup \, \{\tau_q^n<T, \|X^n\|_\infty\le R\} \, \cup \, \{\sup_{0\le t\le\theta}
|X_{T}^n-X_{T-t}^n| > a\} \big)^\complement \\
&= \; \{\tau_q^n\ge T, \, \|X^n\|_\infty\le R, \, \sup_{0\le t\le\theta}
|X_{T}^n-X_{T-t}^n| \le a\}.
\end{align*}
On this event, suppose that $w(X^n,\delta)>\varepsilon$. By the definition of $w$, there exist 
$(u-2\delta)\vee 0 \le s<t<u\le T$ such that $\operatorname{dist}\bigl(X_t^n,[X_s^n,X_u^n]\bigr)>\varepsilon$. Since $\delta\le\theta/2$, we have $u-s\le2\delta\le\theta$. Moreover, it must hold that $u<T$. Indeed, otherwise $\varepsilon<|X^n_u-X^n_t|\le \sup_{0\le t\le\theta}
|X_{T}^n-X_{T-t}^n| \le a \le \varepsilon/4$ which leads to a contradiction. Now, by replacing $X^n$ with $-X^n$ if necessary, it is enough to treat the case
\[
X_t^n-X_s^n>\varepsilon,\qquad X_t^n-X_u^n>\varepsilon.
\]
For $r\in[0,T)$, set 
$$i(r):=\max\{k:\tau_k^n\le r\}.$$ 
This index exists and is smaller than $q$ because $\tau_0^n=0$ and $\tau_q^n\ge T> r$. By maximality, $r<\tau_{i(r)+1}^n$, and hence
$
\bigl|X_r^n-X^n_{\tau_{i(r)}^n}\bigr|\le a.
$
Set
\[
j:=i(s),\qquad m:=i(t),\qquad \ell:=i(u).
\]
Since $j\le m\le\ell$ and
\[
X_{\tau_m^n}^n-X_{\tau_j^n}^n
>\varepsilon-2a=\varepsilon/2>0,
\qquad
X_{\tau_m^n}^n-X_{\tau_\ell^n}^n
>\varepsilon-2a=\varepsilon/2>0,
\]
we have $j<m<\ell$. In particular, $X_{\tau_m^n}^n>X_{\tau_\ell^n}^n$, implying that not all grid increments starting with the increment from the $m$-th to the $(m+1)$-th grid time up until the increment from the $(\ell-1)$-th to the $\ell$-th grid time can be positive. Let $r_0$ be the first index in $\{m+1,\ldots,\ell\}$ with
\[
X_{\tau_{r_0}^n}^n-X_{\tau_{(r_0-1)}^n}^n \, < \, 0,
\]
and put $k:=r_0-1$. Since $\tau^n_{r_0}\le \tau^n_{\ell}< T$, by definition of the stopping times \eqref{eq:defi_stopping_times},
\[
X_{\tau_k^n}^n-X_{\tau_{k+1}^n}^n \, \ge \, a \, > \, b.
\]
All preceding increments starting from the $m$-th grid time through the $k$-th grid time are positive, so
$X_{\tau_k^n}^n\ge X_{\tau_m^n}^n$. Therefore
\[
X_{\tau_k^n}^n-X_s^n
\, \ge \, X_{\tau_m^n}^n-X_s^n
\, > \, \varepsilon-a \, = \, \frac{3\varepsilon}{4}>b.
\]
Because $j<m$, maximality of $j=i(s)$ gives $s<\tau_m^n$. Thus
\[
s <\tau_m^n\le\tau_k^n<\tau_{k+1}^n\le\tau_\ell^n\le u.
\]
Consequently
\[
s\in[(\tau_k^n-\theta)\vee0,\tau_k^n],
\qquad
\tau_{k+1}^n-\tau_k^n\le u-s\le\theta,
\]
so $\sigma_{k,\theta}^n=\tau_{k+1}^n$. Moreover, $j<m\le k<\ell<q$, and hence $1\le k\le q-1$. The preceding inequalities therefore imply
\[
\operatorname{dist}\bigl(
X^n_{\tau_k^n},[X_s^n,X^n_{\tau_{k+1}^n}]
\bigr)>b, \quad\text{ and hence }\quad
\mathcal R_\theta^n(\tau_k^n,\sigma_{k,\theta}^n)>b.
\]
Thus $H_{k,\theta}^n$ occurs.

In the opposite directional case, apply the same argument to $-X^n$. The oscillation grid is unchanged because it is defined through absolute increments, and interval distance is invariant under multiplication by $-1$, hence the same event $H_{k,\theta}^n$ occurs. This proves~\eqref{eq:pathwise_cover}.
\end{proof}

\subsection{Proof of Corollary \ref{cor:main}}

By Theorem~\ref{thm:tightness_criterion_M1}, it suffices to show that \eqref{it3:main_thm} is equivalent to the two restricted-class conditions in the statement. Fix $n\ge1$, $\delta,\varepsilon>0$, and $\xi\in\{+1,-1\}$. For
$\tau\in\mathcal T^n$, define
\[
A_\xi^n(\tau)
:=\big\{
\sup_{(\tau-\delta)\vee0\le s\le\tau}
\xi\bigl(X_\tau^n-X_s^n\bigr)>\varepsilon
\big\}.
\]
By Lemma~\ref{lem:stopped_paths_random_windows}(b), $A_\xi^n(\tau)\in\mathcal F_\tau^n$. Recall that  \eqref{it3:main_thm} is equivalent to 
\[
\lim_{\delta\downarrow0}\ \limsup_{n\to\infty}
\ \sup_{\substack{\tau, \sigma\in\mathcal T^n\\
\tau\le\sigma\le(\tau+\delta)\wedge T}} \, 
\mathbb P^n\!\left(
\xi\bigl(X_\tau^n-X_\sigma^n\bigr)>\varepsilon,\;
A_\xi^n(\tau)
\right) \, = \, 0,
\]
for both choices of $\xi$.

We are going to identify the inner supremum with the corresponding restricted-class supremum. First, let $\xi=1$, let $\tau\in\mathcal T^{n,\xi}_{\delta,\varepsilon}$, and let $\sigma$ be admissible. On $\{X_\tau^n-X_\sigma^n>\varepsilon\}$ one necessarily has $\tau<T$; hence, by the definition of $\mathcal T^{n,\xi}_{\delta,\varepsilon}$, $A_1^n(\tau)$ occurs $\Pro^n$-almost surely. Therefore
\[
\mathbb P^n\!\left(X_\tau^n-X_\sigma^n>\varepsilon\right)
\, = \, 
\mathbb P^n\!\left(X_\tau^n-X_\sigma^n>\varepsilon,\;A_1^n(\tau)\right).
\]
Conversely, take arbitrary admissible $\tau,\sigma\in\mathcal T^n$, $\tau\le \sigma \le (\tau+\delta)\wedge T$, and set $A:=A_1^n(\tau)$. Define $\widehat\tau:=\tau\mathbf1_A+T\mathbf1_{A^\complement}$ and $\widehat\sigma:=\sigma\mathbf1_A+T\mathbf1_{A^\complement}$. Since $A\in\mathcal F_\tau^n\subseteq\mathcal F_\sigma^n$, both random times are stopping times: for $t<T$,
$\{\widehat\tau\le t\}=A\cap\{\tau\le t\}$ and $\{\widehat\sigma\le t\}=A\cap\{\sigma\le t\}$.
Moreover,
\[
\widehat\tau\le\widehat\sigma\le(\widehat\tau+\delta)\wedge T,
\qquad
\widehat\tau\in \mathcal T^{n,+1}_{\delta,\varepsilon},
\qquad
\text{and}\qquad
\left\{X_\tau^n-X_\sigma^n>\varepsilon,\;A\right\}
=
\left\{X_{\widehat\tau}^n-X_{\widehat\sigma}^n>\varepsilon\right\},
\]
from which we easily conclude that both suprema coincide, proving the claim for $\xi=+1$.

For $\xi=-1$ the same argument applies, and the conclusion follows from Theorem~\ref{thm:tightness_criterion_M1}.

\subsection{Proof of Corollary \ref{prop:multidim}}

The proof essentially follows from \cite[Thm.~12.7.2]{whitt}, combined with Theorem~\ref{thm:tightness_criterion_M1} and Corollary~\ref{cor:main} applied to $(\xi^\top X^n)_{n\ge1}$, $\xi\in\mathbb S^{d-1}$. 
The only additional ingredient is the following approximation of the sphere $\mathbb S^{d-1}$ by a finite $\rho$-net. For $\R^d$-valued paths $x$, one has $w(x,\delta)=\sup_{\xi\in\mathbb S^{d-1}}w(\xi^\top x,\delta)$, and $\xi\mapsto w(\xi^\top x,\delta)$ is Lipschitz continuous with constant $2\|x\|_\infty$. Hence, for every finite $\rho$-net $N_\rho$ of $\mathbb S^{d-1}$,
\[
w(x,\delta)\,\le\,\max_{\xi\in N_\rho}w(\xi^\top x,\delta)+2\rho\,\|x\|_\infty .
\]
Together with Condition \eqref{it1:classical_charact_tightness_M1}, this reduces \eqref{eq:classical_tightness_condition_modulus} for $(X^n)_{n\ge1}$ to \eqref{eq:classical_tightness_condition_modulus} for the finitely many projections $(\xi^\top X^n)_{n\ge1}$, $\xi\in N_\rho$, with $\rho>0$ chosen small enough.

\section{Application to Markov-driven additive processes}
\subsection{From moving averages to Markovian dependence}
A major and highly influential line of development in $M_1$ limit theory arose from the study of heavy-tailed dependent sequences, where the verification of tightness has historically been a central challenge. The functional $M_1$ theory in this setting was initiated by Avram and Taqqu \cite{avram}, who established weak functional $M_1$ convergence for moving averages driven by heavy-tailed i.i.d. innovations $\zeta_k$, that is for processes of the form
\begin{align} X^n_t := \; a_n^{-1}\sum_{k=0}^{\lfloor nt \rfloor} Z_k, \qquad Z_k  := \; \sum_{j=0}^\infty c_j \zeta_{k-j}, \qquad \qquad t\ge 0,\,  n\ge 1, \label{eq:moving_averages_defi}
\end{align}
where $c_j\ge 0$ for all $j\ge 0$ and $(a_n)_{n\ge 1}$ is a suitable normalising sequence, under additional regularity assumptions. Their work opened a substantial line of research concerned with stable limits under increasingly general forms of dependence.

A further step toward more general dependence structures was taken by Bartkiewicz, Jakubowski, Mikosch and Wintenberger \cite{bartkiewicz_jakubowski_mikosch_wintenberger}, who established stable limit theorems for the normalised partial sums of strictly stationary dependent and heavy-tailed sequences, at the level of endpoint convergence. Basrak, Krizmanić and Segers \cite{basrak_krizmanic_segers} subsequently developed this theory at the functional level, proving weak functional convergence in the $M_1$ topology for general strictly stationary, jointly regularly varying sequences satisfying suitable anti-clustering and asymptotic independence conditions.

A particularly important class within this broader dependence framework arises when the dependence is generated by an underlying Markov process. Jara, Komorowski and Olla \cite{jara_komorowski_olla} established stable endpoint limits for additive functionals of discrete-time Markov chains under suitable ergodicity and tail assumptions and, for an integrated functional of a continuous-time pure-jump Markov process, obtained convergence of finite-dimensional distributions to those of a stable process. Mikosch and Wintenberger \cite{mikosch_wintenberger} established endpoint limits for partial sums of regularly varying functions of stationary discrete-time Markov chains under drift conditions of Foster--Lyapunov type. Cattiaux and Manou-Abi \cite{cattiaux_manou_abi} subsequently pursued the functional theory for additive functionals of discrete-time Markov chains by deriving suitable mixing properties from strong ergodic assumptions and then applying existing stable functional limit theorems for stationary mixing sequences. Their approach relies on strong ergodic assumptions in order to reduce the problem to existing limit theorems for stationary sequences.

Our Theorem~\ref{thm:tightness_MDAP} below should be viewed as complementing this line of work at the level of $M_1$ tightness. Rather than imposing assumptions designed to produce a particular stable limit such as stationarity, regular variation, mixing, or drift conditions, we exploit only the strong Markov structure needed for the tightness argument itself. The underlying process and its state space may vary with $n$, and the result applies to general (pathwise) additive functionals of general continuous-time strong Markov processes; partial sums of discrete-time Markov chains are covered by Remark~\ref{rem:discrete_time} below. Consequently, whenever finite-dimensional convergence is available from model-specific arguments, Theorem~\ref{thm:tightness_MDAP} provides a local mechanism for upgrading it to functional convergence in $M_1$ (see \cite[Thm.~11.6.6]{whitt}).\\

For each $n\ge 1$, let $E^n$ be a metric space with Borel $\sigma$-algebra $\mathcal E^n$, and equip the space $\D_{E^n}[0,\infty)$ of càdlàg paths $y:[0,\infty)\to E^n$ with the $\sigma$-algebra $\mathscr D^n:=\sigma\{y\mapsto y(t):t\ge0\}$. For $t\ge0$, let $\theta_t$ denote the shift on $\D_{E^n}[0,\infty)$, $(\theta_ty)(s):=y(t+s)$, $s\ge0$. Let $(\Pro^n_x)_{x\in E^n}$ be a family of probability measures on $(\D_{E^n}[0,\infty),\mathscr D^n)$ such that $\Pro^n_x(\{y:y(0)=x\})=1$ for every $x\in E^n$ and $x\mapsto\Pro^n_x(B)$ is $\mathcal E^n$-measurable for every $B\in\mathscr D^n$. Further, let $(\Phi^n_t)_{t\ge 0}$ be a càdlàg $E^n$-valued process, defined on a filtered probability space $(\Omega^n,\mathcal F^n,\mathbb F^n,\Pro^n)$, which satisfies the usual conditions. And let $\Phi^n$ be $\mathbb F^n$-adapted and strong Markov with respect to $\mathbb F^n$ and with transition family $(\Pro^n_x)_{x\in E^n}$, that is, for every $\mathbb F^n$-stopping time $\tau$ and every $B\in\mathscr D^n$ it holds that
\begin{equation}\label{eq:strong_Markov}
\Pro^n\bigl(\theta_\tau\Phi^n\in B \, | \, \mathcal{F}^n_\tau \bigr)
\; = \; \Pro^n_{\Phi^n_\tau}(B) \qquad \text{ on } \{\tau < \infty\}.
\end{equation}
Here, by the argument in the proof of Lemma~\ref{lem:stopped_paths_random_windows}(a), $\Phi^n_\tau$ is $\mathcal F^n_\tau/\mathcal E^n$-measurable and $\theta_\tau\Phi^n$ is $\mathcal F^n/\mathscr D^n$-measurable on $\{\tau<\infty\}$. Further, let $A^n:\D_{E^n} [0,\infty) \to \D_\R [0,\infty)$ be such that $y\mapsto A^n_t(y):=A^n(y)(t)$ is $\mathscr D^n$-measurable for every $t\ge0$, such that the process $(A^n_t(\Phi^n))_{t\ge0}$ is $\mathbb F^n$-adapted, and such that, for some $G^n\in\mathscr D^n$ with $\Pro^n(\Phi^n\in G^n)=1$, the additivity property
\begin{align}
A^n_{t+s}(y) \; = \; A^n_t(y) + A^n_s(\theta_t y), \qquad y\in G^n,\ s,t\ge 0, \label{eq:additivity_property}
\end{align}
holds.
For a sequence $(a_n)_{n\ge 1} \subseteq (0,\infty)$, define
$$ X^n_t := \; a_n^{-1} A^n_t(\Phi^n), \qquad t\in [0,T].$$
We will call $X^n$ a \emph{Markov-driven additive process} (MDAP). In accordance with the standing assumption of Section~\ref{sec:main_results}, we assume that $\Delta X^n_T=0$ $\Pro^n$-almost surely.

A simple example of an MDAP generating additive path functional $A^n$ is
$$ A^n_t(y) \, = \, \int_0^t f_n(y(s))\diff s \, + \, \sum_{0<u\le t} g_n(y(u-),y(u)), \qquad  t\ge0,$$
and $A^n(y)\equiv0$,
where $f_n:E^n\to \R$, $g_n:E^n\times E^n\to \R$ can be any measurable functions with $g_n(x,x)=0$ for all $x\in E^n$ (so that the sum runs over the jump times of $y$), such that, for all $t\ge0$,
$$ \int_0^t |f_n(y(s))|\diff s < \infty \qquad \text{ and } \qquad \sum_{0<u\le t}| g_n(y(u-),y(u))| < \infty.$$
Indeed, the path $t\mapsto A^n_t(y)$ is càdlàg, and \eqref{eq:additivity_property} holds. Moreover, $A^n_t(\Phi^n)$ coincides with a functional of $(\Phi^n_s)_{s\le t}$, so that $(A^n_t(\Phi^n))_{t\ge0}$ is $\mathbb F^n$-adapted.

\begin{theorem}[Tightness conditions for MDAPs] \label{thm:tightness_MDAP}
Let $(X^n)_{n\ge1}$ be MDAPs, and suppose that for all $n\ge1$, $\delta,\varepsilon>0$, and all $\xi\in\{+1,-1\}$,
there exist sets $K^\xi_{n,\delta,\varepsilon}\in\mathcal E^n$, such that the following conditions hold:
\begin{enumerate}
    \item the uniform stochastic boundedness and endpoint conditions \eqref{it1:classical_charact_tightness_M1}, \eqref{it2:classical_charact_tightness_M1} from Theorem \ref{thm:classical_characterisation_tightness_M1} are satisfied; \label{it1:tightness_MDAP}
    \item for every $\varepsilon>0$ and $\xi \in \{+1,-1\}$, \label{it2:tightness_MDAP}
    \begin{align*}
        \limsup\limits_{\delta \downarrow 0}\ \limsup_{n\to \infty} \ \sup_{\tau \in \mathcal{T}^{n,\xi}_{\delta,\varepsilon}} \ \Pro^n \big( \tau < T, \, \Phi^n_{\tau } \notin K^\xi_{n,\delta,\varepsilon}\big) \; = \; 0;
    \end{align*}
    \item for every $\varepsilon>0$ and $\xi \in \{+1,-1\}$, \label{it3:tightness_MDAP}
    \begin{align*}
        \limsup\limits_{\delta \downarrow 0}\ \limsup_{n\to \infty} \ \sup_{x \in K^\xi_{n,\delta, \varepsilon}} \ \Pro^n_x \Big( \inf_{0\le t \le \delta}\,  \xi A_t^n \, < \, -\varepsilon a_n\Big) \; = \; 0,
    \end{align*}
\end{enumerate}
where we use the convention $\operatorname{sup}(\emptyset):=0$ and where, under $\Pro^n_x$, $A^n_t$ is evaluated at the canonical process $Y_t(y):=y(t)$. Then the sequence $(X^n)_{n\ge 1}$ is tight on $(\D_\R[0,T], M_1)$.
\end{theorem}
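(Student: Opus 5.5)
The plan is to verify the hypotheses of Corollary~\ref{cor:main}. Conditions \eqref{it1:classical_charact_tightness_M1} and \eqref{it2:classical_charact_tightness_M1} are assumed in item~\ref{it1:tightness_MDAP}, so only the restricted-class condition remains. Fix $\varepsilon>0$, $\xi\in\{+1,-1\}$, $\delta>0$, $n\ge1$, a stopping time $\tau\in\mathcal T^{n,\xi}_{\delta,\varepsilon}$, and $\sigma\in\mathcal T^n$ with $\tau\le\sigma\le(\tau+\delta)\wedge T$.

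On $\{\xi(X^n_\tau-X^n_\sigma)>\varepsilon\}$ we necessarily have $\tau<T$. We split this event according to whether $\Phi^n_\tau\in K^\xi_{n,\delta,\varepsilon}$:
\[
\Pro^n\bigl(\xi(X^n_\tau-X^n_\sigma)>\varepsilon\bigr)\le \Pro^n\bigl(\tau<T,\ \Phi^n_\tau\notin K^\xi_{n,\delta,\varepsilon}\bigr)+\Pro^n\bigl(\tau<T,\ \Phi^n_\tau\in K^\xi_{n,\delta,\varepsilon},\ \xi(X^n_\tau-X^n_\sigma)>\varepsilon\bigr).
\]
The first term is handled uniformly in $\tau$ by item~\ref{it2:tightness_MDAP}.

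For the second term we use additivity \eqref{eq:additivity_property} on $\{\Phi^n\in G^n\}$. This gives $a_n(X^n_\sigma-X^n_\tau)=A^n_{\sigma-\tau}(\theta_\tau\Phi^n)$. Since $0\le\sigma-\tau\le\delta$, the event $\{\xi(X^n_\tau-X^n_\sigma)>\varepsilon\}$ is contained in
\[
\Bigl\{\inf_{0\le t\le\delta}\xi A^n_t(\theta_\tau\Phi^n)<-\varepsilon a_n\Bigr\}=\{\theta_\tau\Phi^n\in B\}, \qquad B:=\Bigl\{y:\inf_{0\le t\le\delta}\xi A^n_t(y)<-\varepsilon a_n\Bigr\}.
\]
This step removes $\sigma$: $\sigma$ is not determined by $\theta_\tau\Phi^n$, but it can be dominated by the infimum over the whole window.

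We must check that $B\in\mathscr D^n$. Since $t\mapsto A^n_t(y)$ is càdlàg, the infimum equals the infimum over rationals in $[0,\delta)$ together with $t=\delta$, as in \eqref{eq:countable_sup}. Each $y\mapsto A^n_t(y)$ is $\mathscr D^n$-measurable by assumption, so $B$ is measurable.

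Conditioning on $\mathcal F^n_\tau$ and applying the strong Markov property \eqref{eq:strong_Markov}, using that $\{\tau<T,\ \Phi^n_\tau\in K\}\in\mathcal F^n_\tau$ by the analogue of Lemma~\ref{lem:stopped_paths_random_windows}(a), gives
\[
\Pro^n\bigl(\tau<T,\ \Phi^n_\tau\in K,\ \theta_\tau\Phi^n\in B\bigr)=\E^n\bigl[\mathbf 1_{\{\tau<T,\ \Phi^n_\tau\in K\}}\,\Pro^n_{\Phi^n_\tau}(B)\bigr]\le\sup_{x\in K}\Pro^n_x(B).
\]
The right-hand side is independent of $\tau$ and $\sigma$ and is controlled by item~\ref{it3:tightness_MDAP}. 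If $K=\emptyset$ the term is zero, consistent with the convention $\operatorname{sup}(\emptyset)=0$.

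Taking the supremum over $\tau,\sigma$, then $\limsup_n$, then $\delta\downarrow0$, both terms vanish. Corollary~\ref{cor:main} then yields tightness.

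The main obstacle is making the additivity and measurability step rigorous. This requires:
\begin{itemize}
\item handling the null set outside $G^n$;
\item confirming that $A^n_{\sigma-\tau}(\theta_\tau\Phi^n)$ is indeed bounded by the window infimum pathwise;
\item ensuring that the strong Markov identity applies with the random evaluation time $\sigma-\tau$ absorbed into the deterministic-window event $B$.
\end{itemize}
Everything else is a direct union bound.
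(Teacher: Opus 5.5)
Your proposal is correct and follows the paper's proof essentially step by step: you reduce to Corollary~\ref{cor:main}, split according to whether $\Phi^n_\tau\in K^\xi_{n,\delta,\varepsilon}$, use additivity on $\{\Phi^n\in G^n\}$ to absorb the random increment $\sigma-\tau\in[0,\delta]$ into the window event $\{\theta_\tau\Phi^n\in B\}$, and then apply the tower and strong Markov properties to bound the second term by $\sup_{x\in K^\xi_{n,\delta,\varepsilon}}\Pro^n_x(B)$. The only cosmetic difference is that you carry $\xi$ through the event $B$ directly, whereas the paper treats $\xi=+1$ and declares the case $\xi=-1$ analogous.
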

Note that, implicitly, we require that the sets of stopping times $\mathcal{T}^{n,\xi}_{\delta, \varepsilon}$ are defined with respect to the filtrations $\mathbbm{F}^n$ for which the $X^n$ are MDAPs.
\begin{proof}[Proof of Theorem \ref{thm:tightness_MDAP}]
    First, let $\delta,\varepsilon>0$, $n\ge 1$, $\tau \in \mathcal{T}^{n,+1}_{\delta, \varepsilon}$, and $\sigma\in\mathcal T^n$ with $\tau \le \sigma \le (\tau+\delta)\wedge T $. Then, on the event $\{\Phi^n\in G^n\}$ of full $\Pro^n$-probability,
    $$ X^n_\tau - X^n_\sigma \; = \; - a_n^{-1}\big(A^n_\sigma(\Phi^n) - A^n_\tau(\Phi^n)\big) \; = \;-a_n^{-1} A^n_{\sigma-\tau}(\theta_\tau \Phi^n),$$
    by \eqref{eq:additivity_property} with $t=\tau$ and $s=\sigma-\tau\in[0,\delta]$, and therefore, up to a $\Pro^n$-null set,
   \begin{align} \{X^n_\tau - X^n_\sigma > \varepsilon\} \; \subseteq \; \big \{ \inf_{0\le t\le \delta} \, A^n_{t}(\theta_\tau \Phi^n) \, < \, - \varepsilon a_n \big\} . \label{eq:proof_application_thm_1}\end{align}
    The event on the right-hand side of \eqref{eq:proof_application_thm_1} is $\{\theta_\tau\Phi^n\in B\}$, where $B:=\{y\in\D_{E^n}[0,\infty):\inf_{0\le t\le\delta}A^n_t(y)<-\varepsilon a_n\}$ belongs to $\mathscr D^n$, since by right-continuity the infimum may be restricted to $t\in(\Q\cap[0,\delta])\cup\{\delta\}$. Moreover, $\sigma=T$ on $\{\tau=T\}$, so that $\{X^n_\tau - X^n_\sigma > \varepsilon\}\subseteq\{\tau<T\}$. Since $\{\Phi^n_\tau\in K^+_{n,\delta,\varepsilon}\}\in\mathcal F^n_\tau$, the tower property yields
    \begin{align}
        \Pro^n (X^n_\tau - X^n_\sigma > \varepsilon) \; &\le \; \Pro^n \big( \tau < T, \, \Phi^n_{\tau} \notin K^+_{n,\delta,\varepsilon}\big) \, + \, \E^n \Big[ \ind_{\{\Phi^n_{\tau} \in K^+_{n,\delta,\varepsilon}\}} \Pro^n (X^n_\tau -X^n_\sigma > \varepsilon \, | \, \mathcal{F}^n_\tau )\Big]. \label{eq:proof_application_thm_2}
    \end{align}
    By \eqref{eq:proof_application_thm_1}, the second summand on the right-hand side is bounded by 
    \begin{align*}
        \E^n \Big[ \ind_{\{\Phi^n_{\tau} \in K^+_{n,\delta,\varepsilon}\}}\,  \Pro^n \big(\inf_{0\le t\le \delta} \, A^n_{t}(\theta_\tau \Phi^n) \, < \, - \varepsilon a_n \, \big | \, \mathcal{F}^n_\tau \big)\Big].
    \end{align*}
    The strong Markov property \eqref{eq:strong_Markov} implies equality of this term to
   \begin{align*}
   \E^n \Big[ \ind_{\{\Phi^n_{\tau} \in  K^+_{n,\delta,\varepsilon}\}}&\,  \Pro^n_{\Phi^n_{\tau}} \big(B^+ \, \big)\Big]
   \le \; \sup_{x \in K^+_{n,\delta, \varepsilon}} \ \Pro^n_x \big( B^+\big),
   \end{align*}
    where the supremum is $0$ if $K^+_{n,\delta,\varepsilon}=\emptyset$. The resulting bound to \eqref{eq:proof_application_thm_2} together with the assumptions of the theorem, then yield the conditions of Corollary~\ref{cor:main} for $\xi=+1$. The proof for the case $\xi=-1$ follows in full analogy.
\end{proof}

\begin{remark}[Discrete-time chains]\label{rem:discrete_time}
Let $(Y^n_k)_{k\ge0}$ be a time-homogeneous Markov chain taking values in $(E^n,\mathcal E^n)$ and defined on $(\Omega^n,\mathcal F^n,\Pro^n)$. For $x\in E^n$ denote by $\Pro_x^n$ the laws of the chain when started from $x$, let $f_n:E^n\to\R$ be measurable and define
$$
X^n_t:=a_n^{-1}\sum_{k=1}^{\lfloor nt\rfloor}f_n(Y^n_k), \qquad t\in[0,T].
$$
Here $\Delta X^n_T=a_n^{-1}f_n(Y^n_{nT})$ if $nT\in\N$, and $\Delta X^n_T=0$ otherwise. Hence the standing assumption $\Delta X^n_T=0$ of Section~\ref{sec:main_results} holds if and only if, for every $n\ge1$, either $nT\notin\N$ or $f_n(Y^n_{nT})=0$ $\Pro^n$-a.s.; this is the case, e.g., if $T\notin\Q$. Without loss of generality, assume that $T \notin \Q$ throughout this remark, otherwise extend the $X^n$ continuously by $X^n_{T+t}=X^n_T$.
Let $\mathcal F^n_t$ be the $\Pro^n$-completion of $\sigma(Y^n_0,\ldots,Y^n_{\lfloor nt\rfloor})$, and let it be constant on each interval $[k/n,(k+1)/n)$. The filtrations $(\mathcal{F}^n_t)_{0\le t\le T}$ then satisfy the usual conditions. For $\tau\in\mathcal T^n$, the discrete random time $\lfloor n\tau\rfloor$ is a stopping time for the discrete filtration $(\mathcal F^n_{k/n})_{k\ge0}$, and if $\tau\le\sigma\le(\tau+\delta)\wedge T$, then 
$$X^n_\tau-X^n_\sigma=-a_n^{-1}\sum_{k=\lfloor n\tau\rfloor+1}^{\lfloor n\sigma\rfloor}f_n(Y^n_k)$$
with $0\le\lfloor n\sigma\rfloor-\lfloor n\tau\rfloor\le\lfloor n\delta\rfloor+1$. Therefore the proof of Theorem~\ref{thm:tightness_MDAP}, with the discrete-time strong Markov property at $\lfloor n\tau\rfloor$ in place of \eqref{eq:strong_Markov}, shows that $(X^n)_{n\ge1}$ is tight on $(\D_\R[0,T],M_1)$ under Condition~\ref{it1:tightness_MDAP}, Condition~\ref{it2:tightness_MDAP} with $\Phi^n_\tau$ replaced by $Y^n_{\lfloor n\tau\rfloor}$, and
\[
\limsup_{\delta\downarrow0}\ \limsup_{n\to\infty}\ \sup_{x\in K^\xi_{n,\delta,\varepsilon}}\Pro^n_x\Bigl(\min_{0\le j\le \lfloor n\delta\rfloor+1}\ \xi\sum_{k=1}^{j}f_n(Y^n_k)<-\varepsilon a_n\Bigr)=0
\]
for every $\varepsilon>0$ and $\xi\in\{+1,-1\}$.
\end{remark}


\subsection{A chemical reaction network with a discontinuous limit}
\label{sec:application_CRN}

As a natural and brief illustration of the benefits of Theorem~\ref{thm:tightness_MDAP}, consider the
four-species chemical reaction network studied by Laurence and Robert
\cite{laurence_robert_aimd}. Let
$$\Phi^N=(X^{1,N},X^{2,N},X^{3,N},X^{4,N})$$
denote their càdlàg strong Markov jump process with state space $\N_0^4$, where $\N_0:=\{0, 1, 2, \dots\}$, which represents the existing numbers of particles of four species, and let $(\Pro^N_x)_{x\in\N_0^4}$ be its transition family. In this section, we write $\Pro$, $\E$ and $\Pro_x$ for $\Pro^N$, $\E^N$ and $\Pro^N_x$. We focus on the
fourth coordinate, whose only possible transitions are
$$
x_4\longmapsto x_4+1
\quad\text{at rate }\kappa_3x_2x_3,
\qquad
x_4\longmapsto x_4-1
\quad\text{at rate }\kappa_5x_4,
$$
for constants $\kappa_3,\kappa_5>0$. For example, the probability of an increase in the number of particles of species four by one particle during an infinitesimal time interval $(t,t+h]$ is $\kappa_3x_2x_3 h$, i.e.
$$ \mathbb P\big(\Phi^{N}_{t+h}=(x_1,x_2,x_3,x_4+1) \mid \Phi^{N}_t=(x_1,x_2,x_3,x_4) \big) \; = \;
\kappa_3 x_2 x_3 \,h+\oland(h).$$
{As in \cite[Equation~(40)]{laurence_robert_aimd}, $\Phi^N$ can be realised as the solution of a system of stochastic differential equations driven by independent Poisson point processes $\mathcal P_0,\ldots,\mathcal P_5$ on $\R_+^2$ with intensity $\diff s\otimes\diff u$, where $\mathcal P_i$ drives the reaction with rate constant $\kappa_i$; in particular,
\begin{equation}\label{eq:CRN_SDE_4}
X^{4,N}_t=X^{4,N}_0+\int_{(0,t]\times\R_+}\ind_{\{u\le\kappa_3X^{2,N}_{s-}X^{3,N}_{s-}\}}\,\mathcal P_3(\diff s,\diff u)-\int_{(0,t]\times\R_+}\ind_{\{u\le\kappa_5X^{4,N}_{s-}\}}\,\mathcal P_5(\diff s,\diff u).
\end{equation}
The chain is non-explosive and the paths of $\Phi^N$ are piecewise constant with finitely many jumps on compact intervals, and, $\Phi^N$ satisfies \eqref{eq:strong_Markov} with respect to the usual augmentation $\mathbb F^N$ of its natural filtration.

We consider the initial condition
$$\Phi^N_0=(0,N,1,v_N), \qquad \frac{v_N}{\sqrt N}\longrightarrow v\ge0,$$
that is, the setting of \cite[Sect.~6.1]{laurence_robert_aimd} with $y_N=N$.
For this initial condition, Laurence and Robert \cite[Prop.~16]{laurence_robert_aimd} show that $(X^{4,N}_t/\sqrt N)_{t\in[0,T]}$ converges in distribution, in the
$M_1$ topology, to a discontinuous AIMD process with state space $\R_+$, that is a process which exhibits an exponential decay between jumps or a negative jump proportional
to the current value of the state (multiplicative decrease) and, for the increasing
part, positive jumps depending on the current state or a linear growth (additive
increase). Their approach to proving $M_1$ convergence is to remove the fast reaction intervals, establish $J_1$ convergence for the resulting time-changed process, and then recover $M_1$ convergence on the original time scale by linearly interpolating the microscopic staircases on the fast intervals and comparing the resulting paths in the $M_1$ metric. With respect to establishing tightness, our criterion avoids such pathwise reconstructions: after a macroscopic upward movement, it suffices to rule out a rapid reversal. Since all negative jumps of $X^{4,N}$ are deaths occurring at total rate $\kappa_5X^{4,N}$, this reduces to a local short-time death estimate.\\

Define 
$$A_t^N(Y):=Y^4_t-Y^4_0,\qquad a_N:=\sqrt N.$$
Then $A^N$ is additive and \eqref{eq:additivity_property} holds with $G^N=\D_{\N_0^4}[0,T]$. Moreover, the process
$$ Z_t^N:=\frac{A_t^N(\Phi^N)}{\sqrt N} = \frac{X^{4,N}_t-X^{4,N}_0}{\sqrt N}$$
is an MDAP. Since $v_N/\sqrt N\to v$, tightness of $(Z^N)_{N\ge 1}$ is equivalent
to tightness of $(X^{4,N}/\sqrt N)_{N\ge 1}$. The uniform stochastic boundedness and endpoint conditions in Condition \ref{it1:tightness_MDAP} of Theorem \ref{thm:tightness_MDAP} follow readily from the cycle decomposition and short-time estimates established in \cite{laurence_robert_aimd}. In particular, only finitely many fast cycles occur on compact time intervals, each produces an $\Oland_{\Pro}(1)$ contribution after scaling by $\sqrt N$, and outside these cycles the fourth coordinate evolves only through deaths. It remains to show Conditions \ref{it2:tightness_MDAP} and \ref{it3:tightness_MDAP} of Theorem \ref{thm:tightness_MDAP}.

Fix $\varepsilon,\delta >0$ and define
$$K^+_{N,\delta,\varepsilon} := \{x\in\N_0^4:x_4\le \sqrt{N/\delta} \}.$$
Uniform stochastic boundedness of the sequence $(X^{4,N}/\sqrt{N})_{N\ge 1}$ immediately gives Condition \ref{it2:tightness_MDAP} for $\xi=+1$.
It remains to control a rapid reversal after such a positive excursion.
If $\Phi_0^N=x=(x_1,x_2,x_3,x_4)$, let $D_x(t)$ be the number of initial particles of species four which have
disappeared by time $t$. Their remaining number forms a pure-death chain with rate $\kappa_5 \times \emph{\text{its previous value}}$, whose transition probabilities solve classical Kolmogorov equations. From there, it is straightforward to show that
$$ D_x(\delta) \sim \operatorname{Bin}\big(x_4,1-e^{-\kappa_5\delta}\big).
$$
Particles created after time zero cannot contribute negatively relative to
the initial level. Hence, pathwise,
$$ A_t^N(\Phi^N)\, \ge \, -D_x(t), \qquad t\ge0.$$
Consequently, since $D_x$ is nondecreasing, by Markov's inequality and the fact that $1-\exp(-\kappa_5 \delta) \le \kappa_5 \delta$,
\begin{align*}
\sup_{x\in K^+_{N,\delta,\varepsilon}} \Pro_x\big( \inf_{0\le t\le\delta}A_t^N(\Phi^N) <-\varepsilon\sqrt N \big) \; \le  \; \sup_{x\in K^+_{N,\delta,\varepsilon}} \frac{\E\big[ D_x(\delta) \big]}{\varepsilon \sqrt{N}} \; \le \; \frac{\kappa_5\sqrt\delta}{\varepsilon} \; \longrightarrow \; 0
\end{align*}
as $\delta \to 0$, which proves Condition \ref{it3:tightness_MDAP} for $\xi=+1$.

For the opposite direction, we are simply going to show that 
\begin{align} \lim_{\delta\downarrow0}\, \limsup_{N\to\infty} \, \sup_{\tau\in\mathcal T^{N,-1}_{\delta,\varepsilon}} \, \Pro(\tau<T) \; = \;0, \label{eq2:proof_CRN} \end{align}
from which both Conditions \ref{it2:tightness_MDAP} and \ref{it3:tightness_MDAP} follow for $\xi=-1$ with $K^-_{N,\delta,\varepsilon}=\emptyset$. By definition \eqref{defi:restricted_stopping_time_classes} of $\mathcal T^{N,-1}_{\delta,\varepsilon}$, for every $\tau\in\mathcal T^{N,-1}_{\delta,\varepsilon}$ and up to a $\Pro$-null set,
$$\{\tau<T\} \subseteq \Big\{ \sup_{\substack{0\le s\le t\le T\\t-s\le\delta}} \bigl(X^{4,N}_s-X^{4,N}_t\bigr) >\varepsilon\sqrt N \Big\}.$$
It therefore suffices to show that the probability of the event on the right-hand side
vanishes. Let
$$ C^N_t := \sum_{0<u\le t} \ind_{\{\Delta X^{4,N}_u\, =\, -1\}},\qquad \Delta X^{4,N}_u:=X^{4,N}_u-X^{4,N}_{u-},$$
denote the number of deaths of particles of species four up to time $t$. Then
$$ X^{4,N}_s -X^{4,N}_t \, \le \, C^N_t- C^N_s, \qquad 0\le s\le t.$$
Thus a downward motionlarger than $\varepsilon\sqrt N$ requiresmore than $\varepsilon\sqrt N$ death reactions during the same interval. Fix $M>0$ and define
$$\rho^N_M := \inf\Big\{t\ge0:X_t^{4,N}>M\sqrt N\Big\}, \qquad \lambda_N:=\kappa_5M\sqrt N .
$$
For $t\le\rho^N_M$, the transition rate corresponding to a death of a particle of species four is bounded by $
\kappa_5X_{t-}^{4,N}\le \lambda_N$. Now, combining the above
\begin{align} \Pro(\tau< T)  \; \le \;  \Pro(\rho^N_M\le T) +
\Pro\Big( \exists\,k\in\{0,\ldots,\lceil T/\delta \rceil\}: C^N_{(k+2)\delta \wedge\rho^N_M} - C^N_{k\delta\wedge\rho^N_M} \ge \varepsilon \sqrt{N} \Big) \label{eq:1_proof_CRN}
,
\end{align}
since $C^N$ is increasing and any increase of a certain size on an interval of length $\delta$ implies an increase of at least this size between $k\delta$ and $(k+2)\delta$ for some $k\in\{0,\ldots,\lceil T/\delta \rceil\}$. By the Poisson representation \eqref{eq:CRN_SDE_4}, the deaths of species four are generated by the points of the Poisson point process $\mathcal P_5$ on $\mathbb R_+^2$ lying below the level $\kappa_5X_{s-}^{4,N}$. Since before $\rho_M^N$ this level is bounded by $\lambda_N$, the number of such deaths on any interval of length $2\delta$ is stochastically dominated by a $\operatorname{Pois}(2\lambda_N\delta)$ random variable. Hence,
$$  \Pro \Big( C^N_{(k+2)\delta\wedge\rho_M^N} - C^N_{k\delta\wedge\rho_M^N} \ge\varepsilon\sqrt N \Big)\;  \le  \; \Pro\left( \operatorname{Pois}(2\lambda_N\delta) \ge\varepsilon\sqrt N \right). $$
Let $\gamma>0$. First choose $M>0$ large enough such that, by Condition~\eqref{it1:classical_charact_tightness_M1} of Theorem~\ref{thm:classical_characterisation_tightness_M1} and $v_N/\sqrt N\to v$, $\limsup_{N\to\infty}\Pro(\rho^N_M\le T)\le \gamma$, and subsequently choose $\delta>0$ sufficiently small that $2\kappa_5M\delta<\varepsilon$. Then, from \eqref{eq:1_proof_CRN}, applying a union bound and estimating the Poisson tail by a standard Chernoff bound
\begin{align*} 
    \Pro(\tau< T) \; &\le \; \Pro(\rho_M^N\le T) + ( \lceil T/\delta \rceil+1)
\Pro\big( \operatorname{Pois}(2\lambda_N\delta)\ge \varepsilon\sqrt N \big) 
\\
\; &\le \; \Pro(\rho_M^N\le T) + ( \lceil T/\delta \rceil+1) \e^{-c\sqrt N},
\end{align*}
for some constant $c=c(M,\delta,\varepsilon)>0$. Letting $N\to \infty$, and since $\gamma$ was arbitrary, we deduce \eqref{eq2:proof_CRN}.

Theorem~\ref{thm:tightness_MDAP} therefore applies: the sequence $(Z^N)_{N\ge1}$ is tight on $(\D_\R[0,T],M_1)$, and hence so is the sequence $(X^{4,N}/\sqrt N)_{N\ge1}$.

\printbibliography

\end{document}